\documentclass[a4paper,reqno]{amsart} 

\usepackage{amssymb}
\usepackage[mathcal]{euscript} 

\usepackage{tikz-cd} 
\usetikzlibrary{babel}

\usepackage{hyperref}

\newcommand{\bydef}{:=} 
\newcommand{\defby}{=:}

\newcommand{\id}{\mathrm{id}}

\newcommand{\trace}{\mathrm{tr}}

\newcommand{\cA}{\mathcal{A}}

\newcommand{\cL}{\mathcal{L}} 
\newcommand{\cM}{\mathcal{M}}

\newcommand{\ZZ}{\mathbb{Z}}

\newcommand{\RR}{\mathbb{R}}

\newcommand{\FF}{\mathbb{F}} 

\DeclareMathOperator{\rad}{\mathrm{rad}}

\DeclareMathOperator{\End}{\mathrm{End}}
\DeclareMathOperator{\Isom}{\mathrm{Isom}}

\DeclareMathOperator{\AAut}{\mathbf{Aut}}

\DeclareMathOperator{\alg}{\mathrm{alg}}

\newcommand{\frgl}{{\mathfrak{gl}}}

\newcommand{\Ort}{\mathrm{O}}

\newcommand{\SP}{\mathrm{Sp}}

\newenvironment{alphaenumerate} 
{\begin{enumerate}

}{\end{enumerate}}

\newcommand{\subo}{_{\bar 0}} 
\newcommand{\subuno}{_{\bar 1}}

\newcommand{\bup}{\textup{b}}

\newtheorem{theorem}{Theorem}[section]
\newtheorem{proposition}[theorem]{Proposition}

\newtheorem{corollary}[theorem]{Corollary}

\theoremstyle{definition} 
\newtheorem{definition}[theorem]{Definition}
\newtheorem{example}[theorem]{Example}

\theoremstyle{remark} \newtheorem{remark}[theorem]{Remark}
\numberwithin{equation}{section}

\begin{document}

\title
{Vidinli algebras}

\author[A.~Elduque]{Alberto Elduque} 
\address{Departamento de
Matem\'{a}ticas e Instituto Universitario de Matem\'aticas y
Aplicaciones, Universidad de Zaragoza, 50009 Zaragoza, Spain}
\email{elduque@unizar.es} 
\thanks{Both authors have been supported by grant
PID2021-123461NB-C21, funded by 
MCIN/AEI/ 10.13039/501100011033 and by
 ``ERDF A way of making Europe''. A.E. also acknowledges support by grant 
E22\_20R (Gobierno de Arag\'on), while J.R-I. acknowledges support by
grant Fortalece 2023/03 funded by ``Comunidad Aut\'onoma 
de La Rioja''; and by a predoctoral research grant FPI-2023 funded by ``Universidad de La Rioja''.}

\author[J.~R\'andez-Ib\'a\~nez]{Javier R\'andez-Ib\'a\~nez}
\address{Departamento de Matem\'aticas y Computaci\'on,
Universidad de La Rioja,\phantom{26006 26006} 26006 Logro{\~n}o, Spain}
\email{javier.randez@unirioja.es}

\subjclass[2020]{Primary 17D99; Secondary 17A05, 17A36}

\keywords{Vidinli algebra, conic algebra, automorphism, simple, multiplication algebra.}


\begin{abstract}
A new class of nonassociative algebras, Vidinli algebras, is defined based on recent work of 
Co\c skun and Eden. These algebras are conic (or quadratic) algebras with the extra restriction that
the commutator of any two elements is a scalar multiple of the unity.

Over fields of characteristic not $2$, Vidinli algebras may be considered as generalizations of
the Jordan algebras of Clifford type. However, in characteristic $2$, the class of Vidinli algebras is
much larger and include the unitizations of anticommutative algebras.
\end{abstract}

\maketitle

\bigskip

\section{Introduction}

In a recent preprint: \cite{CE}, a nonassociative algebra defined long ago in \cite{Vidinli} by H\"useyin
Tevfik Pasha, also named Vidinli because of his geographical origin, is reviewed and extended. The
algebras considered in \cite{CE}, called \emph{Vidinli algebras}, 
are the algebras defined on $\RR^{2n+1}$, endowed with its
canonical scalar product $x\cdot y$, and with multiplication given by
\[
xy=(x\cdot 1)y+(y\cdot 1)1-(x\cdot y)1+\omega(x,y)1,
\]
where $1$ is a fixed element of length one and $\omega$ is a suitable nondegenerate skew-symmetric
bilinear form defined on the subspace orthogonal to $1$ and  extended to the 
whole vector space by imposing $\omega(1,z)=0$ for all $z$. The algebra defined in \cite{Vidinli}
is the one where $n=1$.

Note that the element $1$ is the unity of this algebra: $1x=x=x1$ for all $x$, and the multiplication
of elements orthogonal to $1$ is just given by: 
\[
xy=\bigl(-(x\cdot y)+\omega(x,y)\bigr)1.
\]

\medskip

The goal of this paper is to define a natural class of algebras over arbitrary fields, that will also be
called \emph{Vidinli algebras}, that include the algebras defined above, and to
study their properties.
In this way, we may look at the algebras considered in \cite{CE} with a suitable perspective, that
highlights their basic features, and relate them to other well-known classes of nonassociative algebras.

\smallskip

Over fields of characteristic not $2$, Vidinli algebras are close to the Jordan algebras of symmetric 
bilinear forms,
also termed spin factors or Jordan algebras of Clifford type (see \cite{McCrimmon}). However,
in characteristic $2$, the class of Vidinli algebras is very large and includes the unitizations of 
the anticommutative algebras (Example \ref{ex:char2}).

\medskip

In Section \ref{se:Vidinli}, the Vidinli algebras will be defined (Definition \ref{df:Vidinli}) as those 
conic algebras satisfying that $[x,y]=xy-yx$ is a scalar multiple of the unity for all $x,y$. Conic algebras
are those unital algebras that satisfy a sort of Cayley-Hamilton equation of degree $2$ (see
Section \ref{se:Vidinli} for details).

Once Vidinli algebras are defined, some important examples will be provided.

\smallskip

Section \ref{se:charnot2} will be devoted to studying Vidinli algebras over a field $\FF$ 
of characteristic not $2$. In this case, any Vidinli algebra is endowed with an 
associated bilinear form $B$, and $A$
decomposes as $A=\FF 1\oplus V$, with multiplication given by \eqref{eq:aubv}:
\[
(\alpha 1+u)(\beta 1+v)=\bigl(\alpha\beta-B(u,v)\bigr)1+(\alpha v+\beta u),
\]
for any $\alpha,\beta\in \FF$ and $u,v\in V$. This is exactly the formula for the multiplication in a 
Jordan algebra of a symmetric bilinear form, except for the fact that $B$ is not necessarily symmetric. 
Therefore, Vidinli algebras may be considered as generalizations of these Jordan algebras.

\smallskip

It then turns out that the radical $\rad B=\{x\in A\mid B(x,A)=0=B(A,x)\}$ is an ideal of $A$ that
squares to $0$, and that $A/\!\rad B$ is either simple or isomoprhic to $\FF\times\FF$ 
(Theorem \ref{th:rad}).

The associated bilinear form $B$ determines the Vidinli algebra over fields of characteristic not $2$,
and the determination of the group of automorphisms, Lie algebra of derivations, multiplication algebra,
center, ..., performed in Section \ref{se:charnot2}, depends naturally on $B$.

\smallskip

The final Section \ref{se:char2} deals with Vidinli algebras over a field $\FF$ of characteristic $2$. 
Here, and assuming that the dimension is at least $3$, any Vidinli algebra splits, as before, 
as $A=\FF 1\oplus V$, but not canonically!, and the multiplication is given by \eqref{eq:AV*phi}:
\[
(\alpha 1+u)(\beta 1+v)=\bigl(\alpha\beta+\varphi(u,v)\bigr)1+(\alpha v+\beta u+ u*v),
\]
for $\alpha,\beta\in\FF$ and $u,v\in V$, where $\varphi$ is a bilinear form and $*$ an anticommutative
multiplication in $V$. In particular, as in Example \ref{ex:char2}, the unitization of any 
anticommutative algebra, which corresponds to the case $\varphi=0$, is a Vidinli algebra.

\bigskip

In what follows $A$ will always denote a  nonassociative algebra
 over an arbitrary ground field $\FF$. Multiplication will be usually denoted by juxtaposition.
As usual, the bracket $[x,y]$ of two elements in $A$ is given by $[x,y]=xy-yx$. The word \emph{algebra}
will mean nonassociative (i.e., not necessarily associative) algebra, that is, a vector space $A$ endowed
with a bilinear map (the multiplication) $A\times A\rightarrow A$.


\bigskip

\section{Vidinli algebras. Definition and examples}\label{se:Vidinli}

In order to define Vidinli algebras, let us first recall the notion of \emph{conic}
algebra, as in \cite{GP11} and \cite{Loos11} (see also, \cite[\S 16]{GPR25}). A \emph{conic}
algebra is a unital nonassociative algebra $A$ endowed with a quadratic form $q\colon A\rightarrow \FF$,
satisfying the following two conditions:
\begin{equation}\label{eq:conic}
q(1)=1,\quad x^2-q(x,1)x+q(x)1=0\ \forall x\in A,
\end{equation}
where $q(x,y)=q(x+y)-q(x)-q(y)$ is the polar form of $q$, which is denoted by the same letter.
In the literature, these algebras have also been termed quadratic algebras (see, e.g., 
\cite{Osborn,QuadraticAlternative}). They include the classical composition algebras and, in particular,
the quaternion and octonion (or Cayley) algebras. (See \cite{ElduqueSurvey} for a survey on 
composition algebras over fields.)

The quadratic form $q$, which is part of the definition of a conic algebra, is called the \emph{norm}.

Any conic algebra is power-associative, as the subalgebra generated by any element is a commutative
associative algebra of dimension at most $2$.

\medskip

Vidinli algebras are defined as follows:

\begin{definition}\label{df:Vidinli}
A \emph{Vidinli algebra} is a conic algebra $A$ with the added condition that the bracket of any
two elements is a scalar multiple of the unity: $[A,A]\subseteq \FF 1$.
\end{definition}

Let us consider now some basic examples, already mentioned in the Introduction.

\begin{example}\label{ex:Jordan}
Let $J$ be a Jordan algebra of a symmetric bilinear form or, more generally, 
a Jordan superalgebra of a supersymmetric bilinear form,
over a field $\FF$ of characteristic not $2$; that is, $J=\FF 1\oplus V$, where $V=V\subo\oplus V\subuno$
is a vector superspace (i.e., a vector space graded over the integers modulo $2$) endowed with a 
supersymmetric bilinear form $\bup\colon V\times V\rightarrow \FF$, such that $1$ is the unity, 
and $xy=\bup(x,y)1$
for any $x,y\in V$. The fact that $\bup$ is supersymmetric means that $V\subo$ and $V\subuno$ are
orthogonal relative to $\bup$: 
$\bup(V\subo,V\subuno)=0=\bup(V\subuno,V\subo)$, and that the restriction of
$\bup$ to $V\subo$ is symmetric, while its restriction to $V\subuno$ is skew-symmetric.

One checks easily that $J$ is a conic algebra with norm 
\[
q(\alpha 1+x\subo+x\subuno)=\alpha^2-\bup(x\subo,x\subo),
\] 
for $\alpha\in\FF$, $x\subo\in V\subo$,
$x\subuno\in V\subuno$, and that 
\[
[\alpha 1+x\subo+x\subuno,\beta 1+y\subo+y\subuno]=2\bup(x\subuno,y\subuno)1,
\] 
for 
$\alpha,\beta\in \FF$, $x\subo,y\subo\in V\subo$, and $x\subuno,y\subuno\in V\subuno$.
\end{example}

Recall that if $(V,*)$ is an algebra, its \emph{unitization} is the algebra $A=\FF 1\oplus V$, with
multiplication given by $(\alpha 1+u)(\beta 1+v)=\alpha\beta 1+(\alpha v+\beta u+u*v)$,
for $\alpha,\beta\in\FF$ and $u,v\in V$.

\begin{example}\label{ex:char2}
Let $(V,*)$ be an arbitrary anticommutative (i.e., $v*v=0$ for any $v\in V$) algebra over a field of
 characteristic $2$. Then its unitization $A=\FF 1\oplus V$ is a Vidinli algebra with norm given
by $q(\alpha 1+v)=\alpha^2$ for any $\alpha\in \FF$ and $v\in V$.
\end{example}


\bigskip

\section{Vidinli algebras over fields of characteristic not two}\label{se:charnot2}

Assume throughout this section that the characteristic of our ground field $\FF$ is not $2$.

Let $A$ be a Vidinli algebra with norm $q$, and define the skew-symmetric bilinear form 
$\omega\colon A\times A\rightarrow \FF$ by the formula 
\begin{equation}\label{eq:omega}
[x,y]=\omega(x,y)1,
\end{equation}
for $x,y\in A$. Note that we have $\omega(1,A)=0$.

\medskip

\subsection{Associated bilinear form}\label{ss:bilinear}\quad\null

As $q(1)=1$, we get $q(1,1)=2\neq 0$, so that $A$ splits as $A=\FF 1\oplus V$, where
$V=(\FF 1)^\perp\bydef \{x\in A\mid q(x,1)=0\}$. For any $u,v\in V$, the linearization of \eqref{eq:conic}
gives $uv+vu=-q(u,v)1$, while we also have $uv-vu=[u,v]=\omega(u,v)1$. Adding the last two
equations gives
\begin{equation}\label{eq:uv}
uv=\frac{1}{2}(-q(u,v)+\omega(u,v))1,
\end{equation}
for any $u,v\in V$. Define the bilinear form $B\colon A\times A\rightarrow \FF$ by
\begin{equation}\label{eq:B}
B(x,y)=\frac{1}{2}\left(q(x,y)-\omega (x,y)\right)1,
\end{equation}
for $x,y\in A$. The bilinear form $B$ satisfies
\[
B(x,x)=q(x),\quad B(1,x)=B(x,1)=\frac{1}{2}q(x,1),
\]
for any $x\in A$. We will refer to $B$ as the \emph{bilinear form of the Vidinli algebra}.

We summarize the computations above in the first part of the next result, that characterizes the 
Vidinli algebras over fields of characteristic not $2$.

\begin{theorem}\label{th:charnot2}
Let $A$ be a Vidinli algebra over a field of characteristic not $2$, and let $V$ be the orthogonal 
subspace to $\FF1$ relative to the norm of $A$. Let $B\colon A\times A\rightarrow \FF$ be the bilinear
form in \eqref{eq:B}, then the multiplication in $A$ is determined by the
formula:
\begin{equation}\label{eq:aubv}
(\alpha 1+u)(\beta 1+v)=\bigl(\alpha\beta-B(u,v)\bigr)1+\bigl(\alpha v+\beta u\bigr),
\end{equation}
for any $\alpha,\beta\in \FF$ and $u,v\in V$.

Conversely, given any vector space $V$ endowed with a bilinear form $B\colon V\times V\rightarrow\FF$,
the vector space $\FF 1\oplus V$ becomes a Vidinli algebra with the multiplication in \eqref{eq:aubv}
and the norm given by $q(\alpha 1+v)=\alpha^2+B(v,v)$, for any $\alpha\in \FF$ and $v\in V$.
\end{theorem}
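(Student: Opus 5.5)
The first part is essentially a repackaging of \eqref{eq:uv}. I would expand the product bilinearly, using that $1$ is the unity:
\[
(\alpha 1+u)(\beta 1+v)=\alpha\beta 1+\alpha v+\beta u+uv .
\]
By \eqref{eq:uv}, $uv=\frac12\bigl(-q(u,v)+\omega(u,v)\bigr)1$, and by \eqref{eq:B} this is exactly $-B(u,v)1$ (reading $B$ as scalar-valued). This gives \eqref{eq:aubv}. Recall how \eqref{eq:uv} was obtained. For $u\in V$ we have $q(u,1)=0$, so \eqref{eq:conic} gives $u^2=-q(u)1$. Linearizing yields $uv+vu=-q(u,v)1$, and adding this to $[u,v]=\omega(u,v)1$ and dividing by $2$ gives \eqref{eq:uv}. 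Nothing further is needed here.

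For the converse, let $A=\FF1\oplus V$ with the multiplication \eqref{eq:aubv}, and define the quadratic form by $q(\alpha1+v)=\alpha^2+B(v,v)$. Clearly $1$ is the unity and $q(1)=1$. The polar form is
\[
q(\alpha1+u,\beta1+v)=2\alpha\beta+B(u,v)+B(v,u),
\]
so $q(x,1)=2\alpha$ for $x=\alpha1+u$. I would then check the conic identity \eqref{eq:conic} directly:
\[
x^2=\bigl(\alpha^2-B(u,u)\bigr)1+2\alpha u,
\qquad
q(x,1)x=2\alpha^2 1+2\alpha u,
\qquad
q(x)1=\bigl(\alpha^2+B(u,u)\bigr)1 .
\]
Then $x^2-q(x,1)x+q(x)1=\bigl(\alpha^2-B(u,u)-2\alpha^2+\alpha^2+B(u,u)\bigr)1=0$.

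It remains to show $[A,A]\subseteq\FF1$. In \eqref{eq:aubv} the terms $\alpha\beta1$ and $\alpha v+\beta u$ are symmetric under swapping $(\alpha,u)\leftrightarrow(\beta,v)$. Hence
\[
[\alpha1+u,\beta1+v]=\bigl(B(v,u)-B(u,v)\bigr)1\in\FF1 .
\]

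There is no real obstacle in this argument. The only points needing care are the following.

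\begin{itemize}
\item The factor $1$ in \eqref{eq:B} should be read as a scalar, so that $B$ is a bilinear form.
\item In the converse the norm on $V$ is $B(v,v)$, consistent with $B(x,x)=q(x)$ from the first part. This gives $u^2=-q(u)1$, as the direct part requires.
\item The converse in fact works in any characteristic; characteristic not $2$ is used only in the first part, to split $A=\FF1\oplus V$ and to divide by $2$ in \eqref{eq:uv}.
\end{itemize}
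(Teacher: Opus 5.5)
Your proof is correct and follows essentially the paper's route: the first part is exactly the computation leading to \eqref{eq:uv} combined with the definition \eqref{eq:B} of $B$ (rightly read as scalar-valued), and the converse is the direct check of \eqref{eq:conic} and $[A,A]\subseteq\FF 1$ that the paper leaves as ``straightforward computations''. Your side remark that the converse also holds in characteristic $2$ is correct as well.
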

\begin{proof} 
The last part follows from straightforward computations.
\end{proof}

\begin{remark}\label{re:Vidinli2}
Theorem \ref{th:charnot2} shows that Vidinli algebras over fields of characteristic not $2$ are determined
by vector spaces endowed with a bilinear form. So they are just like the Jordan algebras of a symmetric 
bilinear form (see Example \ref{ex:Jordan}), but where the symmetry of the bilinear form is not required.

Once we are given a vector space $V$ with a bilinear form $B\colon V\times V\rightarrow \FF$, we construct the 
Vidinli algebra $A=\FF 1\oplus V$ with multiplication given by \eqref{eq:aubv}. The bilinear
form of this Vidinli algebra is obtained by extending $B$ to the whole $A$ by means of 
$B(1,1)=1$ and $B(1,V)=0=B(V,1)$.
\end{remark}

The next consequence of Theorem \ref{th:charnot2} follows at once from \eqref{eq:aubv}, together 
with the fact that $[A,A]$ is contained in $\FF 1$.

\begin{corollary}\label{co:JordanHeisenberg}
Let $A$ be a Vidinli algebra over a field of characteristic not $2$. Then the symmetrized algebra
$A^+$, defined on the same vector space but with new multiplication given by
\[
x\cdot y\bydef \frac{1}{2}\bigl(xy+yx\bigr)
\]
is the Jordan algebra of a symmetric bilinear form (see Example \ref{ex:Jordan}).

On the other hand, the anticommutative algebra $A^-$, that is, the algebra defined on $A$ but with
multiplication given by the bracket: $[x,y]=xy-yx$, is nilpotent of degree $\leq 2$.
\end{corollary}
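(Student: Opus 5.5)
The plan is to compute both products directly from \eqref{eq:aubv} in Theorem \ref{th:charnot2}. Write $x=\alpha 1+u$ and $y=\beta 1+v$ with $\alpha,\beta\in\FF$ and $u,v\in V$. Then
\[
xy=\bigl(\alpha\beta-B(u,v)\bigr)1+(\alpha v+\beta u),\qquad
yx=\bigl(\alpha\beta-B(v,u)\bigr)1+(\alpha v+\beta u).
\]
Both statements follow from these two expressions.

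For the symmetrized algebra, average the two products:
\[
x\cdot y=\bigl(\alpha\beta-\bup(u,v)\bigr)1+(\alpha v+\beta u),\qquad
\bup(u,v)\bydef\tfrac12\bigl(B(u,v)+B(v,u)\bigr).
\]
The form $\bup$ is symmetric, and $1$ is still the unity of $A^+$. Replacing $\bup$ by $-\bup$, which is again symmetric, matches the sign convention $xy=\bup(x,y)1$ on $V$ used in Example \ref{ex:Jordan}, with $V=V\subo$ and $V\subuno=0$. Hence $A^+$ is the Jordan algebra of the symmetric bilinear form $-\bup$ on $V$.

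For $A^-$, subtracting the two products gives
\[
[x,y]=\bigl(B(v,u)-B(u,v)\bigr)1\in\FF1,
\]
which is also clear directly from the definition of a Vidinli algebra. Since $1$ is central in $A$, we have $[1,z]=0$ for every $z$. Therefore
\[
[[x,y],z]\in[\FF1,A]=0,
\]
so all products of three elements in $A^-$ vanish, and $A^-$ is nilpotent of degree $\le 2$ in the sense that $(A^-)^3=0$.

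There is no genuine obstacle here. The only points that need care are matching the sign convention of Example \ref{ex:Jordan} and making the meaning of ``nilpotent of degree $\le 2$'' explicit as $(A^-)^3=0$.
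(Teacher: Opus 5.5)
Your proposal is correct and follows essentially the paper's route. The paper just states that the corollary follows at once from \eqref{eq:aubv} together with $[A,A]\subseteq\FF 1$, and you make both computations explicit, including the sign adjustment to the convention of Example \ref{ex:Jordan}: the relevant symmetric form on $V$ is $-\tfrac12 q\vert_V$.
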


\medskip

The structure of Vidinli algebras over fields of characteristic not $2$ is similar to the situation
for the Jordan algebras of a symmetric bilinear form, which are a particular case 
(see Example \ref{ex:Jordan}).

\begin{theorem}\label{th:rad}
Let $A$ be a Vidinli algebra over a field $\FF$ of characteristic not $2$, let $B$ be its bilinear form.
Then the following assertions hold:
\begin{alphaenumerate}
\item 
The radical of $B$, defined as $\rad B\bydef\{x\in A\mid B(x,A)=0=B(A,x)\}$, is an ideal of $A$
that satisfies $\bigl(\rad B\bigr)^2=0$.

\item 
$\rad B$ is the largest solvable ideal of $A$, and either $ A/\!\rad B$ is simple, or $A/\!\rad B$ is isomorphic
to $\FF\times\FF$.

\item There is a unital subalgebra $S$ of $A$ such that $A=S\oplus\rad B$.
\end{alphaenumerate}
\end{theorem}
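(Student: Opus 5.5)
Throughout I work in the model of Theorem \ref{th:charnot2}: $A=\FF 1\oplus V$ with multiplication \eqref{eq:aubv}, and $B$ extended to $A$ by $B(1,1)=1$, $B(1,V)=0=B(V,1)$ (Remark \ref{re:Vidinli2}). Since $B(1,1)=1$, any $x=\alpha 1+u$ in $\rad B$ has $\alpha=B(x,1)=0$. Hence
\[
\rad B=\{u\in V\mid B(u,V)=0=B(V,u)\},
\]
the two-sided radical of $B|_{V}$.

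For (a), take $r\in\rad B$ and $x=\alpha 1+u$. Then \eqref{eq:aubv} gives $xr=-B(u,r)1+\alpha r=\alpha r$ and $rx=\alpha r$, both in $\rad B$. So $\rad B$ is an ideal. Also $rr'=-B(r,r')1=0$ for $r,r'\in\rad B$, so $(\rad B)^2=0$.

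For (c), I pick a complement $W$ of $\rad B$ in $V$ and set $S=\FF 1\oplus W$. By \eqref{eq:aubv}, $S$ is a unital subalgebra, and $A=S\oplus\rad B$. Moreover $S\cong A/\rad B$, and $S$ is again a Vidinli algebra, attached to $(W,B|_W)$. The restriction $B|_W$ is nondegenerate in the two-sided sense: if $w\in W$ satisfies $B(w,W)=0=B(W,w)$, then also $B(w,\rad B)=0=B(\rad B,w)$, so $w\in\rad B\cap W=0$.

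For (b), $\rad B$ is solvable because it squares to $0$. The remaining task is to show that $A/\rad B\cong S$ is simple or isomorphic to $\FF\times\FF$. Then any solvable ideal $I$ has solvable image in $S$, and that image must be $0$: $S$ simple is not solvable (since $S^2=S$, as $S$ is unital), and $\FF\times\FF$ is semisimple. Hence $I\subseteq\rad B$, which gives maximality.

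So the key step is: for $(W,B)$ with trivial two-sided radical, the algebra $S=\FF1\oplus W$ is simple or $\cong\FF\times\FF$. The cases are as follows.
\begin{itemize}
\item If $W=0$, then $S=\FF$, which is simple.
\item Suppose $\dim W=1$, $W=\FF w$. Then $B(w,w)=\beta\neq 0$ and $w^2=-\beta 1$, so $S\cong\FF[t]/(t^2+\beta)$. This is either a field, hence simple, or $\FF\times\FF$ when $-\beta$ is a square.
\item Suppose $\dim W\ge 2$ and $I\neq 0$ is an ideal; I show $1\in I$.
\end{itemize}

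For the last case, take $0\neq x=\alpha1+u\in I$. If $u=0$ we are done. Otherwise, for $v\in W$,
\[
xv-vx=-(B(u,v)-B(v,u))1 \quad\text{and}\quad xv+vx=-(B(u,v)+B(v,u))1+2\alpha v.
\]
Suppose no $v$ makes either scalar coefficient nonzero. Then $B(u,v)=0=B(v,u)$ for all $v$, so $u$ is in the radical, a contradiction. So some $v$ gives $xv-vx=c1+2\alpha' v$ with $c\neq0$ (taking the combination $xv\pm vx$ that works), where $\alpha'$ is $0$ or $\alpha$. If $\alpha=0$ we get $1\in I$ immediately.

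The main obstacle is $\alpha\neq 0$. In that case I first kill the scalar part: $x\cdot x$-type identities, or $x^2-2\alpha x=-(\alpha^2+B(u,u))1$. This puts a scalar in $I$ unless $q(x)=0$. In the remaining situation I multiply by a $w\in W$ chosen so that $B(u,w)=B(w,u)=0$ and $w\notin\FF u$. Such a $w$ exists when $\dim W\ge3$; for $\dim W=2$ a direct computation is needed. This gives $xw=\alpha w\in I$, hence $w\in I$. Then $w v$ for suitable $v$ gives $-B(w,v)1$, which is a nonzero scalar by nondegeneracy, so $1\in I$.

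The low-dimensional case $\dim W=2$ with $q(x)=0$ is where I expect careful handling. One should get either $1\in I$ or a contradiction with the two-sided nondegeneracy of $B$. Note that $\FF\times\FF$ only arises when $\dim W=1$.
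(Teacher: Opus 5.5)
Parts (a) and (c) are correct, and so is your reduction of (b) to the claim that $S=\FF 1\oplus W$ is simple or isomorphic to $\FF\times\FF$ when $B\vert_W$ has trivial two-sided radical. The gap is in that key claim. You explicitly leave the case $\dim W=2$ (with $\alpha\neq 0$ and $q(x)=0$) open. Your assertion that a suitable $w$ exists when $\dim W\geq 3$ is also unjustified: for $\dim W=3$, a dimension count only gives a common kernel of $B(u,\cdot)$ and $B(\cdot,u)$ of dimension $\geq 1$, and this could a priori be $\FF u$. Since you derive that $\rad B$ is the largest solvable ideal from this dichotomy, that part of (b) inherits the gap too.

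The missing observation is what the residual case actually looks like. There you already know $B(u,v)=B(v,u)$ for all $v\in W$, since otherwise $xv-vx$ is a nonzero multiple of $1$ lying in $I$. Also $q(x)=\alpha^2+B(u,u)=0$ with $\alpha\neq 0$ gives $B(u,u)=-\alpha^2\neq 0$. So $H=\{w\in W\mid B(u,w)=0\}$ is a hyperplane of $W$ not containing $u$, and hence is nonzero whenever $\dim W\geq 2$. Any $0\neq w\in H$ gives $xw=\alpha w\in I$, so $0\neq w\in I\cap W$. Two-sided nondegeneracy of $B\vert_W$ then gives $v\in W$ with $wv=-B(w,v)1$ or $vw=-B(v,w)1$ nonzero, whence $1\in I$. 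This handles every $\dim W\geq 2$ at once, with no special treatment of $\dim W=2$ or $\dim W=3$.

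The paper avoids the case split on $\alpha$ and $q(x)$ entirely. Assume $\rad B=0$ and let $I$ be a proper ideal. Any $u\in I\cap V$ satisfies $uV+Vu\subseteq \FF 1\cap I=0$, so $u\in\rad B=0$. Hence $I\cap V=0$ and $I=\FF(1+u)$ for some $u$. Then $(1+u)v=-B(u,v)1+v\in I$ for all $v\in V$ forces $V=\FF u$ and $u^2=1$.

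The paper also shows directly that every solvable ideal lies in $\rad B$, without using the structure of $A/\rad B$. A nonzero $x$ in a solvable ideal must satisfy $q(x)=0=q(x,1)$, since otherwise the ideal contains $1$ or an idempotent. Then $x\in V$, and $xv=-B(x,v)1$, $vx=-B(v,x)1$ together with $1\notin I$ force $B(x,V)=0=B(V,x)$.
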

\begin{proof}
Note first that $\rad B$ is contained in $V$, the orthogonal complement of $\FF 1$ relative to the norm.
Equation \eqref{eq:aubv} shows that $V(\rad B)=(\rad B)V=0$, and hence $\rad B$ is an ideal of $A$ and
$\bigl(\rad B\bigr)^2=0$. 

Now, if $I$ is a solvable ideal of $A$, for any $0\neq x\in I$, \eqref{eq:conic} gives  $q(x)=0$, as otherwise
$I$ would contain $1$, and $q(x,1)=0$, as otherwise $I$ would contain an idempotent. Hence $I$
is contained in $V$, and \eqref{eq:aubv} shows that $B(I,V)=0=B(V,I)$. We conclude that $I$ is 
contained in $\rad B$.

Consider the Vidinli algebra $ A/\!\rad B$ or,
equivalently,  assume $\rad B=0$. In this case, if $A$ is not simple and 
$I$ is a proper ideal of $A$, any $u\in I\cap V$
satisfies $uV+Vu=\bigl(B(u,V)+B(V,u)\bigr)1\subseteq I$, but $1\not\in I$, so we get $u\in\rad B=0$.
Hence $I\cap V=0$, and $I$ is one-dimensional, spanned by an element of the form $1+u$, for some
 $0\neq u\in V$. Then, for any $v\in V$, $(1+u)v=-B(u,v)1+v\in I$, which forces $v\in \FF u$ and the
dimension of $A$ is $2$. Moreover, $(1+u)u=-B(u,u)1+u\in I$, forcing $B(u,u)=-1$. We conclude that
$A=\FF 1+\FF u$, and $u^2=1$. This shows that $A$ is isomorphic to $\FF\times\FF$.

Finally, let $S'$ be a subspace of $V$ complementary to $\rad B$: $V=S'\oplus \rad B$, 
then $S=\FF 1\oplus S'$ is a subalgebra
of $A$ complementing $\rad B$.
\end{proof}

\begin{corollary}\label{co:rad}
Unless $A/\!\rad B$ is isomorphic to $\FF\times\FF$, $\rad B$ is the unique maximal ideal of $A$.
\end{corollary}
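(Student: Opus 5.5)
The plan is to show two things: every proper ideal of $A$ lies in $\rad B$, and $\rad B$ is itself proper. Together these give that $\rad B$ is the unique maximal ideal. The second point is immediate. By the proof of Theorem~\ref{th:rad}, $\rad B\subseteq V$, so $1\notin\rad B$ and $\rad B\neq A$.

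Now let $I$ be a proper ideal of $A$ and consider its image $\bar I=(I+\rad B)/\rad B$ in $A/\!\rad B$. By Theorem~\ref{th:rad}(b), $A/\!\rad B$ is simple unless it is isomorphic to $\FF\times\FF$, and the latter case is excluded by hypothesis. So $\bar I$ is either $0$ or the whole quotient.

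If $\bar I=0$, then $I\subseteq\rad B$, as required. Otherwise $I+\rad B=A$, and we write $1=x+r$ with $x\in I$ and $r\in\rad B$. The aim in this case is to derive a contradiction with $I$ being proper.

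This is the step needing care, and I would handle it with the nilpotency of $\rad B$. Since $(\rad B)^2=0$ by Theorem~\ref{th:rad}(a), we get $r^2=0$. As $1$ and $r$ commute,
\[
(1-r)(1+r)=1-r^2=1,
\]
and this computation takes place inside the associative subalgebra generated by $r$ and $1$. Hence
\[
x(1+r)=(1-r)(1+r)=1.
\]
The left side is an element of $I$, because $I$ is an ideal and $x\in I$. Therefore $1\in I$, so $I=A$, which contradicts properness.

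Thus every proper ideal lies in the proper ideal $\rad B$, and $\rad B$ is the unique maximal ideal of $A$.
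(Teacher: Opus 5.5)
Your proof is correct and follows essentially the same route as the paper's. The paper likewise uses the simplicity of $A/\!\rad B$ to get $A=I+\rad B$ for any ideal $I\not\subseteq\rad B$, writes $1=a+x$, and uses $x^2=0$ to obtain $1=(1-x)(1+x)=a(1+x)\in I$; your explicit check that $\rad B\subseteq V$ is proper is a harmless addition.
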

\begin{proof}
With our hypotheses, if $I$ is an ideal of $A$ not contained in $\rad B$, then $A=I+\rad B$, so $1=a+x$ 
with $a\in I$ and $x\in \rad B$. Then, as $x^2=0$, $1=(1-x)(1+x)=a(1+x)\in I$, so $I=A$, as it contains
the unity.
\end{proof}

\begin{remark}\label{re:rad} If $A/\!\rad B$ is isomorphic to $\FF\times\FF$ and $I$ is a maximal ideal 
of $A$, it must contain $\rad B$, as
otherwise we would have $A=I+\rad B$ and the argument in the proof
of Corollary \ref{co:rad} would give $1\in I$, a contradiction. Hence $\rad B$ is contained in any maximal 
ideal and it follows that $A$ contains exactly two maximal ideals.
\end{remark}

\medskip

\subsection{Vidinli algebras with nondegenerate norm}\label{ss:nondegenerate}\quad\null

If the norm of a finite-dimensional Vidinli algebra $A$ is nondegenerate, 
so is its restriction to $V=(\FF 1)^\perp$, and 
hence there is a linear endomorphism $\sigma\in\End_\FF(V)$ such that 
$\omega(x,y)=q\bigl(\sigma(x),y\bigr)$, for any $x,y\in V$. The skew-symmetry of $\omega$ forces that
$\sigma$ is skew symmetric relative to $q$: $q\bigl(\sigma(x),y\bigr)+q\bigl(x,\sigma(y)\bigr)=0$. By 
Theorem \ref{th:charnot2} the Vidinli algebra $A$ is determined by its norm and the skew
endomorphism $\sigma$.
 
Then the vector space $V$ decomposes  as 
the direct sum of subspaces $V_i$, where the minimal
polynomial of $\sigma\vert_{V_i}$ is a power of a monic irreducible polynomial $p_i(X)\in \FF[X]$.
Proposition 2.1 in \cite{BRR} proves the next result.

\begin{proposition}\label{pr:Vsigma}
Let $A$ be a finite-dimensional Vidinli algebra with nondegenerate norm over a field of 
characteristic not $2$ and let $\sigma\in\End_\FF(V)$ be the skew endomorphism such that
$\omega(x,y)=q\bigl(\sigma(x),y\bigr)$ for any $x,y\in V$. Let $p_1(X),\ldots,p_n(X)\in\FF[X]$ be
the different monic irreducible factors of the characteristic polynomial of $\sigma$. Then for
any $i=1,\ldots,n$ there is a unique $j=1,\ldots,n$ such that $p_j(X)=(-1)^{\deg p_i(X)})p_i(-X)$ ($j$
may be equal to $i$).
Write $j=\hat\imath$.

Reorder the indices so that $n=r+2s$ with  $\hat\imath=i$, for $i=1,\ldots,r$, and $\hat \imath=i+1$, for 
$i=r+2j-1$, $j=1,\ldots,s$. Then $V_i$ is totally isotropic (relative to $q$) for $i=r+1,\ldots r+2s$,
and $V$ is the orthogonal  direct sum (relative to $q$) of the subspaces $V_i$, for $i=1,\ldots,r$, and
$V_i\oplus V_{i+1}$ for $i=r+2j-1$ and $j=1,\ldots,s$.

In particular, $A$ is the sum of the subalgebras $\FF 1\oplus V_i$, for $i=1,\ldots,r$, 
and $\FF 1\oplus(V_i\oplus V_{i+1})$ for $i=r+2j-1$ and $j=1,\ldots,s$. The norm of each of these
subalgebras is nondegenerate.
\end{proposition}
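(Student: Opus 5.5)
The plan is to work with the adjoint of $\sigma$ relative to the nondegenerate symmetric form $q\vert_V$. Skewness of $\sigma$ says that this adjoint is $\sigma^*=-\sigma$. Consequently, for any polynomial $f(X)\in\FF[X]$ and $x,y\in V$ we get
\[
q\bigl(f(\sigma)(x),y\bigr)=q\bigl(x,f(-\sigma)(y)\bigr).
\]

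First, the symmetry of the irreducible factors. Since $\sigma$ and $-\sigma$ are adjoint for a nondegenerate form, they have the same characteristic polynomial $\chi(X)$. Hence $\chi(X)=(-1)^{\dim V}\chi(-X)$. Now $p(X)\mapsto(-1)^{\deg p}p(-X)$ is an involution on monic irreducible polynomials, and it is compatible with factorization. So it permutes the irreducible factors of $\chi$, which gives existence of $\hat\imath$. Uniqueness is clear because distinct $j$ give distinct $p_j$. After reordering, the indices split into fixed points $1,\dots,r$ and $s$ transposed pairs.

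Second, the orthogonality relations. Let $V_i=\ker p_i(\sigma)^{m}$ for large $m$, the primary components. Take $x\in V_i$ and $y\in V_k$ with $k\neq\hat\imath$. Put $g(X)=p_{\hat k}(X)^m$, so that $g(-X)=\pm p_k(X)^m$ annihilates $y$. Then
\[
q\bigl(g(\sigma)(x),y\bigr)=\pm q\bigl(x,p_k(\sigma)^m(y)\bigr)=0.
\]
Since $\hat k\neq i$, the polynomial $g$ is coprime to $p_i$, so $g(\sigma)$ is invertible on the $\sigma$-invariant subspace $V_i$. Thus $q(V_i,V_k)=0$ whenever $k\neq\hat\imath$.

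This gives the orthogonal decomposition stated in Proposition \ref{pr:Vsigma}. For $i>r$ we have $\hat\imath\neq i$, and taking $k=i$ in the relation above shows $V_i$ is totally isotropic. Nondegeneracy of $q$ on $V$, together with the orthogonality just proved, forces each summand $V_i$ ($i\le r$) and $V_i\oplus V_{i+1}$ (paired) to be nondegenerate. Indeed, its radical would be orthogonal to all of $V$.

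Finally, the statement about subalgebras. By \eqref{eq:aubv}, $\FF1\oplus W$ is a subalgebra for any subspace $W\subseteq V$. Its norm is the orthogonal sum of $q\vert_{\FF1}$, which is nondegenerate since $q(1,1)=2\neq0$, and $q\vert_W$, which is nondegenerate by the previous step. The main point needing care is the adjointness identity and the coprimality argument in the second step; everything else is routine. Alternatively, one may simply cite Proposition 2.1 of \cite{BRR} for the linear-algebra part, as the paper indicates.
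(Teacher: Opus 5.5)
Your proposal is correct. The paper gives no argument of its own here: it cites Proposition~2.1 of \cite{BRR} for the linear-algebra content and leaves the final clause about the subalgebras $\FF 1\oplus V_i$ and $\FF 1\oplus(V_i\oplus V_{i+1})$ implicit.

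You instead prove the cited result directly, which is a different route. Skewness makes $-\sigma$ the $q$-adjoint of $\sigma$, giving $q\bigl(f(\sigma)(x),y\bigr)=q\bigl(x,f(-\sigma)(y)\bigr)$. Since adjoint operators share a characteristic polynomial, $\chi(X)=(-1)^{\dim V}\chi(-X)$, and this yields the involution $i\mapsto\hat\imath$ on the irreducible factors. Combining the same identity with the invertibility of $p_{\hat k}(\sigma)^m$ on $V_i$ when $\hat k\neq i$ gives $q(V_i,V_k)=0$ whenever $k\neq\hat\imath$. Total isotropy, the orthogonal decomposition, and nondegeneracy of each block then follow at once.

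Your approach takes a few more lines than the citation, but it makes the section independent of \cite{BRR}. It also shows that only the nondegeneracy of $q\vert_V$ and the skewness of $\sigma$ are used. In addition, you explicitly verify the closing statements that the paper takes for granted. First, $\FF 1\oplus W$ is a subalgebra for every subspace $W\subseteq V$, by \eqref{eq:aubv}. Second, its norm is the orthogonal sum of $q\vert_{\FF 1}$, which is nondegenerate since $q(1,1)=2$, and $q\vert_W$.
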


\begin{remark}\label{re:Vsigma} The Vidinli algebras $A$ considered in \cite{CE}, which are 
defined over the real numbers, are finite-dimensional, with a positive definite norm $q$, 
and with the minimal polynomial of 
$\sigma$ equal to $X^2+1$.

In this case it follows at once that $V$ is an orthogonal direct sum of two-dimensional subspaces $W_i$
invariant under $\sigma$, and hence $A$ is the sum of the three-dimensional subalgebras 
$\FF 1\oplus W_i$, which are isomorphic to the original Vidinli algebra defined in \cite{Vidinli}.

A \emph{degenerate pushout} has been defined in \cite{CE} to express the situation above, but
the definition is not correct, as a quotient is taken in \cite[Definition 7.1]{CE} by the span of
elements of the form $ab$ with $a\in A_i\setminus\FF 1$, $b\in A_j\setminus \FF 1$. But the
subset $A_i\setminus \FF 1$ spans the whole $A_i$, and the same for $A_j$, so that in the quotient
all products $ab$, for $a\in A_i$ and $b\in A_j$ become trivial, and the whole definition gives the
trivial algebra.
\end{remark}

\medskip

\subsection{Automorphisms and derivations}\label{ss:autos_der}\quad\null

Any automorphism $\varphi$ of a Vidinli algebra $A=\FF 1\oplus V$ preserves the unity $1$, 
and also the norm $q$, 
because of \eqref{eq:conic}, the skew-symmetric form $\omega$, because of \eqref{eq:omega}, and
the subspace $V=\{x\in A\setminus \FF 1\mid x^2\in\FF 1\}$. Therefore, the restriction of $\varphi$ to
$V$ is an isometry of $q\vert_V$ and $\omega\vert_V$, and hence of $B\vert_V$:
\[
\varphi\vert_V \in \Isom(V,B\vert_V)=\Ort(V,q\vert_V)\cap \SP(V,\omega\vert_V),
\]
where $\Ort(V,q\vert_V)$ (respectively $\SP(V,\omega\vert_V)$) denotes the orthogonal group of 
$q\vert_V$ (resp. the symplectic group associated to $\omega\vert_V$). But conversely,
Equation \eqref{eq:aubv} shows that any linear map preserving $1$ and $V$ and such that
the restriction to $V$ is an isometry of $B\vert_V$ is an automorphism of $A$.

Moreover, all this is functorial, thus obtaining the next result, that extends \cite[Theorem 4.5]{CE}:

\begin{theorem}\label{th:autos}
The group scheme of automorphisms of a Vidinli algebra $A$ over a field $\FF$ of characteristic
not $2$ is isomorphic (by restriction to $V$) to the group scheme of isometries of $B\vert_V$.
\end{theorem}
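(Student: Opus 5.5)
The plan is to make the preceding discussion functorial. For every unital commutative associative $\FF$-algebra $R$ we consider the $R$-algebra $A_R=A\otimes_\FF R = R1\oplus V_R$, where $V_R=V\otimes_\FF R$. The bilinear form $B$ extends $R$-bilinearly to $V_R$, and \eqref{eq:aubv} continues to hold over $R$. We then define a morphism of group functors
\[
\AAut(A)(R)\longrightarrow \Isom(V,B\vert_V)(R),\qquad \varphi\mapsto\varphi\vert_{V_R},
\]
and show it is a natural bijection. Naturality in $R$ is clear, since everything is defined by base change. Since both sides are affine group schemes, a natural isomorphism of the functors of points gives an isomorphism of group schemes.

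The first step is to check that any $R$-automorphism $\varphi$ of $A_R$ fixes $1$ and leaves $V_R$ invariant. Fixing $1$ is automatic for unital algebras. Over a ring we cannot use the characterization ``$x\notin\FF1$ with $x^2\in\FF1$'', so we use multiplicative identities valid over $R$ instead. By \eqref{eq:aubv}, the map $t\colon A_R\to R$, $t(\alpha1+u)=2\alpha$, satisfies
\[
x^2 - t(x)x + N(x)1 = 0,\qquad N(\alpha1+u)=\alpha^2+B(u,u).
\]
Applying $\varphi$ gives $\varphi(x)^2-t(x)\varphi(x)+N(x)1=0$. Writing $\varphi(x)=\beta1+w$ with $w\in V_R$ and expanding via \eqref{eq:aubv}, the $V_R$-component of $\varphi(x)^2$ is $2\beta w$. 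Comparing $V_R$-components gives $(2\beta-t(x))w=0$. This alone does not force $t(\varphi(x))=t(x)$ when $w$ is a zero divisor, so this is the delicate point.

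To get around it, we use the trace-like form: the linear map $L_x$ has $R$-trace expressible through $t(x)$. Concretely, since $L_x(\beta1+v)=(\alpha\beta-B(u,v))1+(\alpha v+\beta u)$, we get $\trace L_x=(n+1)\alpha$ with $n=\dim V$, and $\trace L_{\varphi(x)}=\trace(\varphi L_x\varphi^{-1})=\trace L_x$. When $n+1$ is invertible in $\FF$ this yields $t\circ\varphi=t$, so $\varphi(V_R)=\ker t=V_R$. When the characteristic divides $n+1$, we would instead try the trace of $L_x$ restricted to the invariant submodule $[A_R,A_R]A_R+\dots$. A cleaner alternative is the polarized identity $xy+yx-t(x)y-t(y)x+N(x,y)1=0$ applied with $y$ ranging over a basis: choosing $y$ with $w$ generic yields enough linear equations to force $2\beta=t(x)$ coefficientwise. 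Ensuring $\varphi(V_R)\subseteq V_R$ over arbitrary $R$ is the main obstacle; if $\dim V=1$ it must be handled separately by a direct computation.

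Once $\varphi(1)=1$ and $\varphi(V_R)=V_R$, comparing the $R1$-components of $\varphi(uv)=\varphi(u)\varphi(v)$ via \eqref{eq:aubv} gives $B(\varphi u,\varphi v)=B(u,v)$. So $\varphi\vert_{V_R}\in\Isom(V,B\vert_V)(R)$, and the restriction map is injective because $\varphi$ is determined by $\varphi(1)=1$ and $\varphi\vert_{V_R}$. Conversely, given an $R$-linear isometry $f$ of $B\vert_{V_R}$, we extend it by $1\mapsto1$. Then \eqref{eq:aubv} shows the extension is multiplicative, and it is invertible since $f$ is. This gives surjectivity and finishes the proof.
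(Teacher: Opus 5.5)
Your strategy is the paper's: automorphisms fix $1$ and stabilize $V$, and then \eqref{eq:aubv} identifies them with the isometries of $B\vert_V$. The paper argues this over $\FF$, using the characterization of $V$ through $x^2\in\FF 1$, and then simply asserts that everything is functorial.

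The genuine gap is the step you flag yourself: showing $\varphi(V_R)\subseteq V_R$ for an arbitrary $R$-point $\varphi$. Your trace argument only covers the case where $V$ is finite-dimensional and the characteristic of $\FF$ does not divide $n+1=\dim A$. The theorem assumes neither. The two alternatives you offer are not arguments: one takes traces on an unspecified invariant submodule, the other uses the polarized identity with a ``generic'' $y$. The case $\dim V=1$ is also left open. So, as written, you have not shown that the restriction map $\AAut(A)(R)\to\Isom(V,B\vert_V)(R)$ is well defined in general.

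The missing idea is that you need not extract $t(\varphi(x))=t(x)$ from $(2\beta-t(x))w=0$ for each $x$ separately. Since $t\circ\varphi^{-1}$ is $R$-linear, it suffices to show that it vanishes on an $R$-basis of $V_R$. For the right choice of $x$ the vector $w$ is then a basis vector, hence not a zero divisor.

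Concretely, take $x=\varphi^{-1}(e)$, where $e$ belongs to an $\FF$-basis of $V$. Then $\varphi(x)=e$, so $\beta=0$ and $w=e$, and your identity reads $t\bigl(\varphi^{-1}(e)\bigr)e=0$. Since $e$ is part of the $R$-basis of $A_R$, this gives $t\bigl(\varphi^{-1}(e)\bigr)=0$. These $e$ span $V_R$, and $V_R=\ker t$ because $2$ is invertible in $R$, so $\varphi^{-1}(V_R)\subseteq V_R$. Applying the same argument to the automorphism $\varphi^{-1}$ yields $\varphi(V_R)=V_R$.

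This works for every $R$ and in any dimension, with no separate treatment of $\dim V=1$. The rest of your proof then goes through as written: preservation of $B$ from the $R1$-components, injectivity of restriction, and the converse via \eqref{eq:aubv}.
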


The Lie algebra of the group scheme of automorphisms of a Vidinli algebra $A$ 
is the Lie algebra of derivations
of $A$, and hence we obtain that this Lie algebra is isomorphic, 
by restriction to $V$, to the Lie algebra of skew endomorphisms relative to $B$.

\begin{theorem}\label{th:der}
The Lie algebra of derivations of a Vidinli algebra $A$ over a field $\FF$ of characteristic not $2$ is 
isomorphic (by restriction to $V$) to the Lie algebra of skew endomorphisms relative to $B\vert_V$:
\[
\{ \delta\in\End_\FF(V)\mid B\bigl(\delta(x),y\bigr)+B\bigl(x,\delta(y)\bigr)=0\ \forall x,y\in V\}.
\]
\end{theorem}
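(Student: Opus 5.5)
I will argue directly at the level of derivations, although the statement also follows from Theorem \ref{th:autos} by taking Lie algebras of the group schemes (or evaluating on the dual numbers $\FF[\epsilon]$, $\epsilon^2=0$). The plan has three steps: show that any derivation $D$ of $A=\FF 1\oplus V$ kills $1$ and preserves $V$; show that $D\vert_V$ is skew relative to $B\vert_V$; and then prove the converse using \eqref{eq:aubv}.

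\emph{Step 1.} From $1\cdot 1=1$ we get $D(1)=2D(1)$, so $D(1)=0$. Take $u\in V$ and write $D(u)=\lambda 1+w$ with $\lambda\in\FF$ and $w\in V$. By \eqref{eq:aubv}, $u^2=-B(u,u)1$, hence $D(u^2)=0$. On the other hand,
\[
D(u)u+uD(u)=(\lambda 1+w)u+u(\lambda 1+w)=2\lambda u-\bigl(B(w,u)+B(u,w)\bigr)1.
\]
Setting this equal to $0$ and looking at the $V$-component gives $2\lambda u=0$. Since the characteristic is not $2$, $\lambda=0$ whenever $u\neq 0$. Thus $D(V)\subseteq V$.

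\emph{Step 2.} For $u,v\in V$ apply $D$ to $uv=-B(u,v)1$. The left-hand side gives $D(u)v+uD(v)=-\bigl(B(D(u),v)+B(u,D(v))\bigr)1$, because $D(u),D(v)\in V$. The right-hand side gives $-B(u,v)D(1)=0$. Hence $B(D(u),v)+B(u,D(v))=0$, so $D\vert_V$ lies in the stated Lie algebra. The restriction map $D\mapsto D\vert_V$ is injective because $D(1)=0$, and it is clearly a Lie algebra homomorphism.

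\emph{Step 3.} Given $\delta$ skew relative to $B\vert_V$, extend it to $A$ by setting $D(1)=0$. I then check the Leibniz rule on products $(\alpha 1+u)(\beta 1+v)$ using \eqref{eq:aubv}. The scalar part contributes $D(1)=0$. The remaining terms are $\alpha\delta(v)+\beta\delta(u)$. These match $D(x)y+xD(y)$ exactly when $\delta(u)v+u\delta(v)=0$, that is, when $-\bigl(B(\delta u,v)+B(u,\delta v)\bigr)1=0$, which is the skewness assumption. This gives surjectivity.

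The only mildly delicate point is Step 1, where the characteristic hypothesis is used to show that $V$ is $D$-invariant. The rest is a routine check.
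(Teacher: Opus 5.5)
Your proof is correct and follows essentially the same route as the paper's self-contained proof. In both, $D(1)=0$; $V$ is shown to be $D$-invariant by applying $D$ to $u^2\in\FF 1$ (you read off the $V$-component via \eqref{eq:aubv}, while the paper uses the linearized conic identity, which is the same computation); skewness comes from applying $D$ to $uv=-B(u,v)1$; and the converse is a direct check from \eqref{eq:aubv}.
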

\begin{proof}
Let us provide a self-contained proof.
Any derivation $\delta$ of $A$
annihilates the unity. It also preserves $V$, because for any $x\in V$, $x^2\in\FF 1$, so 
$0=\delta(x^2)=\delta(x)x+x\delta(x)$ which, by the linearization of \eqref{eq:conic} gives
$0=q\bigl(\delta(x),1\bigr)x-q\bigl(x,\delta(x)\bigr)1$, and hence $q\bigl(\delta(x),1\bigr)=0$ and
$\delta(x)\in V$. Now, for $x,y\in V$, $xy$ lies in $\FF 1$, so 
$0=\delta(xy)=\delta(x)y+x\delta(y)=-\Bigl(B\bigl(\delta(x),y\bigr)+B\bigl(x,\delta(y)\bigr)\Bigr)1$, 
for any $x,y\in V$. The converse is clear.
\end{proof}

\medskip

\subsection{Center and nucleus}\label{ss:nucleus}

The \emph{commutative center} of an algebra is the subspace consisting of the elements that commute 
with all the elements of the algebra: 
\[
K(A)\bydef\{x\in A\mid xy=yx\ \forall y\in A\},
\]
(see, e.g., \cite[Chapter II]{Schafer}). Its (associative)
\emph{nucleus} is the subspace of the elements that associate with any other elements, that is, if 
$(x,y,z)\bydef(xy)z-x(yz)$ is the associator of the elements $x,y,z$, the nucleus is the subspace
\[
N(A)\bydef\{x\in A\mid (x,y,z)=(z,x,y)=(y,z,x)=0\ \forall y,z\in A\}.
\]
Finally, the \emph{center} is the subspace (actually subalgebra) 
\[
Z(A)\bydef N(A)\cap K(A).
\]

An element $x=\alpha 1+u$ in a Vidinli algebra $A$, with $u\in V$, is in the commutative center if and only
if $u$ is in the commutative center, and hence, because of \eqref{eq:aubv}, if and only if $B(u,v)=B(v,u)$
for any $v\in V$. Thus, the commutative center of $A$ is the subspace
\[
K(A)=\{ x\in A\mid B(x,y)=B(y,x)\ \forall y\in A\}.
\]
Now, for $u,v\in V$, the associator $(u,v,u)=(uv)u-u(vu)$ equals $\bigl(-B(u,v)+B(v,u)\bigr)u$, and hence
any element $u\in V\cap N(A)$ lies in the commutative center. Thus, $N(A)$ is contained in $K(A)$ and,
therefore, the nucleus coincides with the center.

For $u\in N(A)\cap V$ and any $v,w\in V$, $0=(u,v,w)=-B(u,v)w+B(v,w)u$. Hence, if $B\vert_V\neq 0$, it
follows that $u$ belongs to $\FF w$ for any $w\in V$ with $B(A,w)\neq 0$, and this forces $\dim V=1$, so
that $\dim A=2$, which implies that $A$ is commutative and associative. On the other hand, 
if $B\vert_V=0$, then $A$ is just the unitization of the trivial algebra $V$, and this is commutative and
associative. We have proved the following result:

\begin{proposition}\label{pr:center}
Let $A$ be a Vidinli algebra over a field of characteristic not $2$ and 
let $B$ be its associated bilinear form, then either 
\begin{itemize}
\item $B\vert_V=0$, or
\item $\dim A=2$, or
\item $Z(A)=N(A)=\FF 1$.
\end{itemize} 
In the first two cases, $A$ is commutative and associative, so its center and nucleus are the whole algebra.
\end{proposition}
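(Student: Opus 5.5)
The plan is to work entirely with the explicit multiplication \eqref{eq:aubv} of Theorem \ref{th:charnot2}, writing each element as $x=\alpha 1+u$ with $u\in V$. Since $1$ lies in both $K(A)$ and $N(A)$, and both are subspaces, every question reduces to the $V$-component $u$. The argument runs in three steps: first show $N(A)\subseteq K(A)$, so that $Z(A)=N(A)$; then compute the nucleus directly; finally split into cases according to whether $B\vert_V$ vanishes.

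\emph{Step 1: $N(A)\subseteq K(A)$.} Take $u\in V\cap N(A)$ and $v\in V$. Using \eqref{eq:aubv}, $uv=-B(u,v)1$ and $vu=-B(v,u)1$, so the associator is
\[
(u,v,u)=(uv)u-u(vu)=\bigl(B(v,u)-B(u,v)\bigr)u.
\]
If $u\neq 0$, this vanishing forces $B(u,v)=B(v,u)$ for every $v\in V$, which means $u$ commutes with $V$ and hence $u\in K(A)$. Therefore $N(A)\subseteq K(A)$, and $Z(A)=N(A)$.

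\emph{Step 2: the nucleus.} For $u\in V\cap N(A)$ and $v,w\in V$,
\[
(u,v,w)=-B(u,v)w+B(v,w)u=0 .
\]
Suppose $B\vert_V\neq 0$, and choose $v,w\in V$ with $B(v,w)\neq 0$. Then $u\in\FF w$. If $u\neq 0$, then $w\in\FF u$, so the identity gives $B(u,v)=B(v,u)$ (by Step 1), and hence $B(u,v)\neq 0$. Now take an arbitrary $w'\in V$ and apply the identity with this $v$: we get $B(u,v)w'=B(v,w')u\in\FF u$, so $w'\in\FF u$. This gives $\dim V=1$.

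\emph{Step 3: the cases.} If $B\vert_V\neq 0$ and $\dim V\geq 2$, Step 2 yields $V\cap N(A)=0$, so $N(A)=Z(A)=\FF 1$. If $\dim V=1$, then $\dim A=2$ and $A$ is generated by a single element; since conic algebras are power-associative, with one-generated subalgebras commutative and associative, $A$ is commutative and associative. If $B\vert_V=0$, then $A$ is the unitization of the zero algebra on $V$, which is clearly commutative and associative.

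The only delicate point is choosing $v$ correctly in Step 2, namely one with $B(u,v)\neq 0$. The symmetry established in Step 1 is exactly what guarantees that such a $v$ exists.
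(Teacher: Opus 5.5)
Your proof is correct and follows essentially the same route as the paper: the associator $(u,v,u)=\bigl(B(v,u)-B(u,v)\bigr)u$ gives $N(A)\subseteq K(A)$, and the identity $(u,v,w)=-B(u,v)w+B(v,w)u$ forces $\dim V=1$ whenever $B\vert_V\neq 0$ and $N(A)\cap V\neq 0$. Fixing a single $v$ with $B(u,v)\neq 0$ is just a more explicit version of the paper's argument that the vectors $w$ with $B(V,w)\neq 0$ span $V$. One small correction to your closing remark: Step 1 is not actually needed to get $B(u,v)\neq 0$, because the identity itself gives $B(u,v)w=B(v,w)u\neq 0$.
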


Recall that an algebra is said to be flexible if and only if $(xy)x=x(yx)$ for any $x,y$.

\begin{corollary}\label{co:center}
Let $A$ be a Vidinli algebra over a field of characteristic not $2$ and 
let $B$ be its associated bilinear form, then the following conditions are 
equivalent:
\begin{alphaenumerate}
\item
$B$ is symmetric,

\item
$A$ is a Jordan algebra,

\item
$A$ is commutative,

\item
$A$ is flexible.
\end{alphaenumerate}
\end{corollary}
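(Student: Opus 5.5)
We prove the cycle (a)$\Rightarrow$(b)$\Rightarrow$(d)$\Rightarrow$(a), and show (a)$\Leftrightarrow$(c) separately. Everything rests on the multiplication formula \eqref{eq:aubv} and on the decomposition $A=\FF 1\oplus V$ of Theorem \ref{th:charnot2}.

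\emph{(a)$\Leftrightarrow$(c).} By \eqref{eq:aubv}, for $x=\alpha 1+u$ and $y=\beta 1+v$ with $u,v\in V$,
\[
[x,y]=\bigl(B(v,u)-B(u,v)\bigr)1.
\]
Hence $A$ is commutative if and only if $B\vert_V$ is symmetric. Since $B(1,1)=1$ and $B(1,V)=0=B(V,1)$ (Remark \ref{re:Vidinli2}), this is the same as $B$ being symmetric. Alternatively, one can quote the description of $K(A)$ already obtained: $K(A)=A$ exactly when $B$ is symmetric.

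\emph{(a)$\Rightarrow$(b).} If $B$ is symmetric, then by (c) $A$ is commutative, so $A=A^+$. Corollary \ref{co:JordanHeisenberg} says $A^+$ is the Jordan algebra of a symmetric bilinear form, so $A$ is a Jordan algebra. Equivalently, \eqref{eq:aubv} with $B\vert_V$ symmetric is literally the multiplication of Example \ref{ex:Jordan} with $\bup=-B\vert_V$ and $V\subuno=0$.

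\emph{(b)$\Rightarrow$(d).} Jordan algebras are commutative by definition, and commutative algebras are flexible, since $(xy)x=x(xy)=x(yx)$.

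\emph{(d)$\Rightarrow$(a).} This is the only step needing a computation, and it was essentially done in the discussion of the nucleus. For $u,v\in V$,
\[
(u,v,u)=\bigl(-B(u,v)+B(v,u)\bigr)u .
\]
Flexibility then gives $\bigl(B(v,u)-B(u,v)\bigr)u=0$ for all $u,v\in V$. For $u\neq 0$ this yields $B(u,v)=B(v,u)$; for $u=0$ the equality is trivial. So $B\vert_V$ is symmetric, and hence $B$ is symmetric as noted in (a)$\Leftrightarrow$(c).

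The main point to check carefully is the associator formula. Using \eqref{eq:aubv}, $uv=-B(u,v)1$, so $(uv)u=-B(u,v)u$. Similarly $vu=-B(v,u)1$, so $u(vu)=-B(v,u)u$. Subtracting gives $(u,v,u)=\bigl(-B(u,v)+B(v,u)\bigr)u$, as used above. Flexibility for general $x=\alpha 1+u$ and $y=\beta 1+v$ is not needed, because the implication only uses the special case $x=u$, $y=v$ with $u,v\in V$.
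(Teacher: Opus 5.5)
Your proof is correct and is essentially the paper's argument: the only nontrivial step, (d)$\Rightarrow$(a), is obtained exactly as in the paper from the associator identity $(u,v,u)=\bigl(B(v,u)-B(u,v)\bigr)u$ for $u,v\in V$. The remaining implications are the routine ones the paper calls clear; you reorganize them as (a)$\Leftrightarrow$(c) plus the cycle (a)$\Rightarrow$(b)$\Rightarrow$(d)$\Rightarrow$(a) and fill in the details the paper omits.
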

\begin{proof}
The implications $\text{(a)}\Rightarrow\text{(b)}\Rightarrow\text{(c)}\Rightarrow\text{(d)}$ are clear.
And if $A$ is flexible, for any $u,v\in V$ we have, as above:
\[
0=(u,v,u)=-B(u,v)u+B(v,u)u,
\]
thus proving that $B$ is symmetric.
\end{proof}

Actually, any flexible conic algebra satisfies that its associated bilinear form is symmetric (see \cite[p.~203]{Osborn}).

\medskip

\subsection{Multiplication algebra}\label{ss:mult_algebra}

Given an algebra $A$ over a field $\FF$, its \emph{multiplication algebra} (see, e.g., 
\cite[Chapter II]{Schafer}) $\cM(A)$  is the associative subalgebra of the algebra of linear endomorphisms
$\End_\FF(A)$ generated by the left and right multiplications by elements of $A$:
\[
\cM(A)\bydef \alg\langle L_x,R_x\mid x\in A\rangle,
\]
where $L_x\colon y \mapsto xy$, $R_x\colon y\mapsto yx$. 

In order to study the multiplication algebra of a Vidinli algebra, we need some notation. 

Let $W$ be
a vector space over a field $\FF$, $w$ an element of $W$ and $f$ an element of the dual vector space
$W^*$. Denote by $wf$ (or $(f(.)w$) the linear map $u\mapsto f(u)w$. Note that the composition
$w_1f_1\circ w_2f_2$ equals $f_1(w_2)w_1f_2$. Also, for subsets $X\subseteq W$ and 
$Y\subseteq W^*$, $XY$ stands for the linear span of $\{ wf\mid w\in X,\, f\in Y\}$. This is a 
subalgebra of $\End_\FF(W)$. Note that $WW^*$ is the subalgebra of $\End_\FF(W)$ consisting of the 
linear endomorphisms of finite rank of $W$. If the dimension of $W$ is finite, then $WW^*$ equals
$\End_\FF(W)$, but this is not so if the dimension is infinite.

\begin{theorem}\label{th:M(A)}
Let $A$ be a Vidinli algebra over a field $\FF$ of characteristic not $2$ with norm $q$, and let $B$ its 
bilinear form (see \eqref{eq:B}). Then the following assertions hold:
\begin{alphaenumerate}
\item 
The multiplication algebra $\cM(A)$ is contained in $\FF\id +A\bigl(B(A,.)+B(.,A)\bigr)$.

\item 
The subspace $(\rad B)\bigl(B(A,.)+B(.,A)\bigr)$ is an ideal of $\cM(A)$ whose square is trivial.

\item 
If the dimension of $A$ is finite, then $A\bigl(B(A,.)+B(.,A)\bigr)$ equals the annihilator in
$\End_\FF(A)$ of $\rad B$.

\item 
If either $B$ is symmetric and $\dim  A/\!\rad B\geq 3$ or $B$ is not symmetric, then
\[
\cM(A)=\FF\id +A\bigl(B(A,.)+B(.,A)\bigr).
\] 
That is, the containment in item \textup{(a)} is an equality.

\item 
If $B$ is symmetric and $\dim  A/\!\rad B=1$, then
\[
\cM(A)=\FF \id \oplus (\rad B)B(A,.)=\FF\id\oplus (\rad B)B(1,.).
\]

\item If $B$ is symmetric and $\dim  A/\!\rad B=2$, 
pick $x\in V$ with $A=\FF 1\oplus\FF x\oplus\rad B$, then
\[
\cM(A)=\Bigl(\FF\id + \FF \sigma_{1,x}+\FF \pi_x\Bigr)\oplus (\rad B)B(A,.),
\]
where $\pi_x\colon A\rightarrow A$ is the projection onto $\FF 1\oplus \FF x$: $\pi_x(1)=1$, 
$\pi_x(x)=x$ and $\pi_x(\rad B)=0$; while $\sigma_{1,x}=q(1,.)x-q(x,.)1$.
\end{alphaenumerate}
\end{theorem}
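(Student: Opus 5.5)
Throughout I extend $B$ to all of $A$ as in Remark \ref{re:Vidinli2}, so that $B(1,1)=1$ and $B(1,V)=0=B(V,1)$. The first step is to rewrite \eqref{eq:aubv} in a coordinate-free way. For $x=\alpha1+u$ and $y=\beta1+v$ one has $B(x,y)=\alpha\beta+B(u,v)$ and $B(x,1)=\alpha=B(1,x)$. Hence
\[
xy=-B(x,y)1+B(x,1)y+B(1,y)x .
\]
This gives
\[
L_x=B(x,1)\id+xB(1,.)-1B(x,.),\qquad R_x=B(1,x)\id+xB(1,.)-1B(.,x).
\]
Put $\cA\bydef\FF\id+A\bigl(B(A,.)+B(.,A)\bigr)$. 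The rule $w_1f_1\circ w_2f_2=f_1(w_2)w_1f_2$ shows that $\cA$ is a unital subalgebra of $\End_\FF(A)$. Since every generator $L_x,R_x$ lies in $\cA$, this proves (a).

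For (b), put $\mathcal{I}\bydef(\rad B)\bigl(B(A,.)+B(.,A)\bigr)$. Every functional $f$ in $B(A,.)+B(.,A)$ vanishes on $\rad B$.
\begin{itemize}
\item Composing with an element of $\mathcal{I}$ on the right: $wf\circ rg=f(r)wg=0$ for $r\in\rad B$.
\item Composing on the left: $rg\circ wf=g(w)rf\in\mathcal{I}$.
\end{itemize}
So $\mathcal{I}$ is an ideal of $\cA$ with $\mathcal{I}^2=0$. Part (d) will show $\mathcal{I}\subseteq\cM(A)$ in general; in cases (e) and (f) this is part of the stated description. For (c), in finite dimension the subspace $B(A,.)+B(.,A)$ of $A^*$ has common zero set exactly $\rad B$. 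By duality it is therefore the annihilator $(\rad B)^0$. Then $A(\rad B)^0$ is exactly the set of endomorphisms killing $\rad B$, since a finite-rank endomorphism vanishing on $\rad B$ is a sum of maps $wf$ with $f\in(\rad B)^0$.

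For (d) I will use the following mechanism. The set $\mathcal{F}\bydef\{f\in A^*\mid Af\subseteq\cM(A)\}$ is stable under $f\mapsto f\circ T$ for $T\in\cM(A)$. Also, $1f\in\cM(A)$ already forces $f\in\mathcal{F}$, because $L_v\circ 1f=vf$. Hence it suffices to put $B(1,.)$ into $\mathcal{F}$. Then, for $w\in V$,
\[
B(1,.)\circ L_w=-B(w,.),\qquad B(1,.)\circ R_w=-B(.,w),
\]
which yields all of $B(A,.)+B(.,A)$. I split into two cases.
\begin{itemize}
\item If $B$ is not symmetric, pick $u\in V$ with $B(u,.)\neq B(.,u)$. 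Then $L_u-R_u=-1\bigl(B(u,.)-B(.,u)\bigr)\in\cM(A)$, so $g\bdef B(u,.)-B(.,u)$ lies in $\mathcal{F}$. Precomposing with $L_w$ gives $g\circ L_w=g(w)B(1,.)-g(1)B(w,.)=g(w)B(1,.)$, since $g(1)=0$. Choosing $w$ with $g(w)\neq0$ puts $B(1,.)$ into $\mathcal{F}$.
\item If $B$ is symmetric, use $L_uL_v=-uB(v,.)-B(u,v)\,1B(1,.)$ for $u,v\in V$. The hypothesis $\dim A/\!\rad B\geq3$ means $B\vert_V$ induces a nondegenerate symmetric form on $V/\!\rad B$ of dimension at least $2$. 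So I can choose $u,v\in V$ with $B(u,v)=0$ and $B(v,v)\neq0$, which gives $uB(v,.)\in\cM(A)$ with $B(v,.)\neq0$ on $V$. Then $(uB(v,.))\circ L_v$ and left multiplications should produce a rank-one map with image $1$ and functional a nonzero multiple of $B(1,.)$. From there the argument proceeds as in the first case.
\end{itemize}
I expect this symmetric subcase to be the main obstacle: one must check that $\id$ does not interfere and that enough rank-one maps appear. This is precisely where dimension $\geq3$ is needed, since in dimension $2$ one only obtains $\sigma_{1,x}$ and $\pi_x$.

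Finally, (e) and (f) are direct computations. If $\dim A/\!\rad B=1$, then $V=\rad B$ and $L_u=R_u=uB(1,.)$ for $u\in V$. These maps span an algebra of square zero, so $\cM(A)=\FF\id\oplus(\rad B)B(1,.)$. If $\dim A/\!\rad B=2$, write $V=\FF x\oplus\rad B$ with $B(x,x)\neq0$ and compute $L_x=R_x$ and $L_x^2$ explicitly in terms of $\sigma_{1,x}$, $\pi_x$ and $\id$. One then checks that $\FF\id+\FF\sigma_{1,x}+\FF\pi_x$ is closed under composition. Together with the ideal $(\rad B)B(A,.)$ generated by the $L_r$, $r\in\rad B$, this gives the stated decomposition.
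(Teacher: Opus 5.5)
Your argument is correct in substance and takes a different, more economical route than the paper. However, the symmetric subcase of (d) is only sketched, and the condition you impose on $u$ there is not enough: you must take $u\in V\setminus\rad B$.

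With $B(u,v)=0$ and $B(v,v)\neq0$ you get $L_uL_v=-uB(v,.)$. Then $uB(v,.)\circ L_v=B(v,v)\,uB(1,.)$, since $B(v,1)=0$. For $w\in V$ you get $L_w\circ uB(1,.)=(wu)B(1,.)=-B(w,u)\,1B(1,.)$. Hence
$1B(1,.)=\bigl(B(v,v)B(w,u)\bigr)^{-1}L_wL_uL_v^2\in\cM(A)$
as soon as some $w\in V$ has $B(w,u)\neq0$, i.e.\ $u\notin\rad B$. Your mechanism then finishes the case.

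Such a $u$ exists exactly because $\dim V/\!\rad B\geq 2$: take $\bar u\neq0$ in $\bar v^{\perp}$. When $\dim A/\!\rad B=2$, every $u\in V$ orthogonal to $v$ lies in $\rad B$, and then $\cM(A)u=\FF u$, so this route cannot reach $1B(1,.)$. That is why (f) differs. Your worry about $\id$ interfering is unfounded: $L_u$ has no identity component for $u\in V$.

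You also need not defer the containment in (b) to (d)--(f). For $r\in\rad B$ one has $L_r=rB(1,.)$, and for $w\in V$, $L_r\circ L_w=-rB(w,.)$ and $L_r\circ R_w=-rB(.,w)$. This gives $(\rad B)\bigl(B(A,.)+B(.,A)\bigr)\subseteq\cM(A)$ in all cases at once, and makes your (f) self-contained.

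As for the comparison:
\begin{itemize}
\item The paper works with $\sigma_{x,y}=q(x,.)y-q(y,.)x$ and $\omega(x,.)1$. It gets $\sigma_{A,A}\subseteq\cM(A)$ from $[\sigma_{1,x},\sigma_{1,y}]=2\sigma_{x,y}$, and $A\omega(A,.)\subseteq\cM(A)$ from $L_y\circ\omega(x,.)1$. It proves (d) by composing $\omega(x,.)y\circ\sigma_{x,z}$ when $\omega\neq0$. When $B$ is symmetric it composes $\sigma_{x,y}\circ\sigma_{x,z}$ over orthogonal nonisotropic triples and then uses a spanning argument.
\item You instead write $L_x,R_x$ explicitly through $B$. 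You use that $\{f\mid Af\subseteq\cM(A)\}$ is closed under precomposition with $\cM(A)$. So both cases of (d) reduce to producing the single operator $1B(1,.)$, the projection onto $\FF1$ along $V$, and you avoid orthogonal bases and nonisotropic triples.
\end{itemize}
The paper's route has one advantage: its intermediate facts come from commutators and sums of $L_x,R_x$. They are therefore reused directly in Theorem~\ref{th:LM(A)} for the Lie multiplication algebra, where compositions like $L_wL_uL_v^2$ are not available.
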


\begin{proof}
For any two elements $x,y\in A$, write $\sigma_{x,y}$ for the linear operator $q(x,.)y-q(y,.)x$.

\noindent (a)\quad For any $x\in V$, the linear map $L_x+R_x$ is given by
\[
L_x+R_x=\begin{cases} 1\mapsto 2x=q(1,1)x,\\
                  y\in V \mapsto -q(x,y)1, \end{cases}
\]
so that we have 
\begin{equation}\label{eq:LxRx}
L_x+R_x=\sigma_{1,x}.
\end{equation}
Also we have 
\begin{equation}\label{eq:Lx-Rx}
L_x-R_x=\omega(x,.)1.
\end{equation}
 Hence we get
\[
\begin{split}
\cM(A)&=\alg\langle L_x,R_x\mid x\in A\rangle\\
 &=\FF\id +\alg\langle L_x+R_x,L_x-R_x\mid x\in V\rangle\\
 &=\FF\id +\alg\langle \sigma_{1,x},\omega(x,.)1\mid x\in V\rangle\\
 &\subseteq \FF\id+ A\bigl(q(A,.)+\omega(A,.)\bigr)\\
 &=\FF\id + A\bigl(B(A,.)+B(.,A)\bigr),
\end{split}
\]
where it has been used that $q(x,y)=B(x,y)+B(y,x)$, while $\omega(x,y)=-B(x,y)+B(y,x)$ because
of \eqref{eq:B}.

\smallskip

\noindent (b)\quad For $x,y\in V$ we compute
\begin{equation}\label{eq:sxy}
\begin{split}
[\sigma_{1,x},\sigma_{1,y}]&=\sigma_{\sigma_{1,x}(1),y}+\sigma_{1,\sigma_{1,x}(y)}\\
 &=2\sigma_{x,y}-q(x,y)\sigma_{1,1}=2\sigma_{x,y}.
\end{split}
\end{equation}
This gives
\begin{equation}\label{eq:sAAMA}
\sigma_{A,A}\subseteq \cM(A).
\end{equation}

On the other hand, $\rad B$ is the intersection $(\rad q)\cap (\rad\omega)$ and, as 
$\sigma_{x,y}=-q(y,.)x$ for $x\in\rad B$ and $y\in A$, we get that $(\rad B)q(A,.)$ is contained
in $\cM(A)$. Also, for $x,y\in A$, $L_y\circ \omega(x,.)1=\omega(x,.)y$, and hence we also have
\begin{equation}\label{eq:omegaAAMA}
A\omega(A,.)\subseteq \cM(A).
\end{equation}
This gives, in particular, $(\rad B)\omega(A,.)\subseteq \cM(A)$ and, as a consequence, we get
\[
(\rad B)\bigl(B(A,.)+B(.,A)\bigr)=(\rad B)\bigl(q(A,.)+\omega(A,.)\bigr)\subseteq \cM(A).
\]

Clearly $(\rad B)\bigl(B(A,.)+B(.,A)\bigr)$ is an ideal of $A\bigl(B(A,.)+B(.,A)\bigr)$, and  the
composition $A\bigl(B(A,.)+B(.,A)\bigr)\circ (\rad B)\bigl(B(A,.)+B(.,A)\bigr)$ is trivial, because
$B(A,\rad B)=0=B(\rad B,A)$. Using (a), assertion (b) follows.

\smallskip

\noindent (c)\quad This assertion follows from the following simple fact: If $W$ is a finite 
dimensional vector space and $Y$ is a subspace of the dual $W^*$,
denote by $Y^\circ$ the subspace $\{x\in W\mid f(x)=0\ \forall f\in Y\}$. Then $WY$ is the
annihilator of $Y^\circ$ in $\End_\FF(W)$. 

\smallskip

\noindent (d)\quad Equation \eqref{eq:omegaAAMA} gives $A\omega(A,.)\subseteq\cM(A)$. Besides,
for any $x,y,z\in A$,
\[
\begin{split}
\bigl(\omega(x,.)y\bigr)\circ\sigma_{x,z}
  &=\bigl(\omega(x,.)y\bigr)\circ\bigl(q(x,.)z-q(z,.)x\bigr)\\
 &=\omega(x,z)q(x,.)y,
\end{split}
\]
and this shows that, if $\omega\neq 0$, 
 $q(x,.)y$ belongs to $\cM(A)$ for any $x,y\in A$ with $\omega(x,.)\neq 0$. But the set 
$\{x\in A\mid \omega(x,.)\neq 0\}$ spans $A$, so $q(A,.)A$ is contained in $\cM(A)$ and
\[
A\bigl(B(A,.)+B(.,A)\bigr)=A\bigl(q(A,.)+\omega(A,.)\bigr)\subseteq \cM(A),
\]
thus proving (d) if $\omega\neq 0$ or, equivalently, if $B$ is not symmetric.

On the other hand, if $B$ is symmetric, so that $B=\frac{1}{2}q(.,.)$, and 
$\dim A/\!\rad B=\dim A/\!\rad q\geq 3$, then for any nonisotropic and orthogonal elements $x,y,z\in A$,
we compute
\[
\begin{split}
\sigma_{x,y}\circ\sigma_{x,z}
 &=\bigl(q(x,.)y-q(y,.)x\bigr)\circ\bigl(q(x,.)z-q(z,.)x\bigr)\\
 &=-q(x,x)q(z,.)y\quad\text{as $q(x,y)=q(x,z)=0$.}
\end{split}
\]
This shows that $q(z,.)y$ belongs to $\cM(A)$ for any orthogonal nonisotropic vectors $y,z\in A$.

Besides, for any nonisotropic $x\in A$, take nonisotropic $y,z$ orthogonal to $x$ with $q(y,z)\neq 0$.
(This is always possible because of our dimension assumption.)
As $q(y,.)x\circ q(x,.)z=q(y,z)q(x,.)x$ we conclude that $q(x,.)x$ also belongs to $\cM(A)$. But we
also have that $q(x,.)y$ belongs to $\cM(A)$ for any
nonisotropic $y$ orthogonal to $x$. Since the nonisotropic vectors span the whole vector space, it follows
that $Aq(x,.)$ belongs to $\cM(A)$ for any nonisotropic $x$, and then also that 
$Aq(A,.)=AB(A,.)=AB(.,A)$ is contained in $\cM(A)$.

\smallskip

\noindent (e)\quad If $B$ is symmetric (i.e., $\omega=0$) and $\dim A/\!\rad B=1$, then $\rad B=V$,
so $V^2=0$ by Theorem \ref{th:rad}, $A$ is commutative, and $L_1=R_1=\id$, while 
$L_x=R_x=\frac{1}{2}q(1,.)x$. Hence we get
\begin{multline*}
\cM(A)=\FF\id+\alg\langle L_x\mid x\in V\rangle \\
 =\FF\id +Vq(1,.)=\FF\id\oplus (\rad B)B(1,.)=\FF\id\oplus (\rad B)B(A,.).
\end{multline*}

\smallskip

\noindent (f)\quad Finally, if $\omega=0$ and $\dim A/\!\rad B=2$, then $B=\frac{1}{2}q(.,.)$. Pick
$x\in V$ such that $A=\FF 1\oplus\FF x\oplus \rad q$. The arguments used to prove (a) and (b) show
that $\cM(A)=\FF\id +\alg\langle \sigma_{y,z}\mid y,z\in A\rangle$. But 
$\sigma_{A,A}=\FF\sigma_{1,x}\oplus\sigma_{A,\rad q}=\FF\sigma_{1,x}\oplus (\rad q)q(A,.)$.
By (b) $(\rad q)q(A,.)$ is an ideal of $\cM(A)$, and hence we have
$\cM(A)=\FF\id+\alg\langle \sigma_{1,x}\rangle+(\rad q)q(A,.)$. Besides, we have 
$\sigma_{1,x}^2=-4q(x)\pi_x$,
and $\pi_x\circ\sigma_{1,x}=\sigma_{1,x}\circ\pi_x=\sigma_{1,x}$, and assertion (f) follows.
\end{proof}

\medskip

Let us consider now the Lie multiplication algebra $\cL(A)$ of a Vidinli algebra $A$, that is, the Lie subalgebra
of $\End_\FF(A)^-\defby\frgl(A)$ generated by the left and right multiplications by elements of $A$.

Note that there is a well defined trace on the subalgebra $AA^*$ of $\End_\FF(A)$, even if
the dimension of $A$ is infinite, given by $\trace(af)=f(a)$ for any $a\in A$ and $f\in A^*$. This coincides
with the usual trace if the dimension of $A$ is finite, in which case we have the equality 
$AA^*=\End_\FF(A)$.

\begin{theorem}\label{th:LM(A)}
Let $A$ be a Vidinli algebra over a field $\FF$ of characteristic not $2$ with norm $q$, and let $B$ its 
bilinear form (see \eqref{eq:B}). Then the following assertions hold:
\begin{alphaenumerate}
\item 
If $B$ is symmetric, then $\cL(A)=\FF\id +\sigma_{A,A}$.

\item 
If $B$ is not symmetric, then 
$\cL(A)=\FF\id+\{\varphi\in A\bigl(B(A,.)+B(.,A)\bigr)\mid \trace(\varphi)=0\}$.
\end{alphaenumerate}
\end{theorem}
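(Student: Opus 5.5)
Since $L_1=R_1=\id$, and by \eqref{eq:LxRx} and \eqref{eq:Lx-Rx} we have $L_x+R_x=\sigma_{1,x}$ and $L_x-R_x=\omega(x,.)1$ for $x\in V$, the Lie algebra $\cL(A)$ is
\[
\cL(A)=\FF\id+\mathrm{Lie}\langle \sigma_{1,x},\ \omega(x,.)1\mid x\in V\rangle .
\]
The plan is to compute the Lie subalgebra generated by these elements inside $\frgl(A)$, separating the cases $\omega=0$ and $\omega\neq 0$.

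\emph{Case (a), $B$ symmetric.} Here $\omega=0$, so the generators reduce to the $\sigma_{1,x}$. By \eqref{eq:sxy}, $[\sigma_{1,x},\sigma_{1,y}]=2\sigma_{x,y}$. For the reverse containment, recall that the $\sigma_{x,y}$ span the orthogonal Lie algebra of $q$ (finite-rank part), and in particular
\[
[\sigma_{a,b},\sigma_{c,d}]=\sigma_{\sigma_{a,b}(c),d}+\sigma_{c,\sigma_{a,b}(d)}.
\]
Hence $\sigma_{A,A}$ is a Lie subalgebra. Since $A=\FF1\oplus V$, it is spanned by the $\sigma_{1,x}$ and the $\sigma_{x,y}$ with $x,y\in V$, all of which lie in the generated Lie algebra. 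This gives $\cL(A)=\FF\id+\sigma_{A,A}$.

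\emph{Case (b), $\omega\neq0$.} Every generator $\sigma_{1,x}$, $\omega(x,.)1$ lies in $A\bigl(B(A,.)+B(.,A)\bigr)$ by Theorem~\ref{th:M(A)}(a), and each has trace zero: $\trace(\sigma_{1,x})=q(1,x)-q(x,1)=0$ and $\trace(\omega(x,.)1)=\omega(x,1)=0$. Since the trace vanishes on commutators of finite-rank operators, $\cL(A)$ is contained in $\FF\id+\mathfrak{s}$, where $\mathfrak{s}$ is the traceless part of $A\bigl(q(A,.)+\omega(A,.)\bigr)$.

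For the reverse containment, first use \eqref{eq:sxy} to obtain $\sigma_{A,A}\subseteq\cL(A)$. Then take brackets of $\sigma_{y,z}$ with $\omega(x,.)1$:
\[
[\sigma_{y,z},\omega(x,.)1]=\omega(x,.)\sigma_{y,z}(1)-\omega(\sigma_{y,z}(x),.)1\quad(\text{up to sign conventions}).
\]
Choosing $y=1$ or $z=1$ and using that $\sigma_{1,z}(1)=2z$, this produces $\omega(x,.)z$ modulo terms of the form $\omega(A,.)1$ already in $\cL(A)$. So $A\omega(A,.)\subseteq\cL(A)$, with no trace issue since $\trace(\omega(x,.)z)=\omega(x,z)$ is handled by linear combinations: all elements produced are brackets, hence traceless.

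Next, mimicking the proof of Theorem~\ref{th:M(A)}(d), bracket $\omega(x,.)y$ with $\sigma_{x,z}$ to obtain
\[
\omega(x,z)q(x,.)y-\bigl(\text{terms in }A\omega(A,.)\bigr).
\]
This yields the elements $q(x,.)y$ modulo what is already known, for $x$ with $\omega(x,.)\neq0$. Such $x$ span $A$.

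\emph{Main obstacle.} The hardest step is the trace bookkeeping: showing that every traceless element of $A\bigl(q(A,.)+\omega(A,.)\bigr)$ is reached, not just a codimension-one piece or a piece missing some diagonal terms $q(x,.)x$. I would first show that the span of everything produced is closed and contains all rank-one $af$ with $f(a)=0$, and then show that the differences $af-bg$ with $f(a)=g(b)$ are attained. The traceless part of $AY$, with $Y=q(A,.)+\omega(A,.)$, is spanned by such elements, and I would check this spanning property directly.
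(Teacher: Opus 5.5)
\textbf{There is a genuine gap in case (b).} Your case (a) is fine and matches the paper, and so is your proof that $\cL(A)$ lies in $\FF\id$ plus the traceless part in case (b). The reverse containment in (b) is where the argument fails.

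First, your bracket formula is not correct ``up to sign''. The map $\sigma_{y,z}$ is skew for $q$, not for $\omega$, and the correct formula is
\[
[\sigma_{y,z},\omega(x,.)1]=\omega(x,.)\sigma_{y,z}(1)-\omega(x,z)q(y,.)1+\omega(x,y)q(z,.)1 .
\]
So the extra terms lie in $q(A,.)1$, not in $\omega(A,.)1$. For $y=1$ and $x,z\in V$ the extra term is $-\omega(x,z)q(1,.)1=-2\omega(x,z)\pi$, where $\pi=\frac12 q(1,.)1$. What you actually obtain is $\omega(x,.)z-\omega(x,z)\pi\in\cL(A)$, not $\omega(x,.)z$.

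Your conclusion $A\omega(A,.)\subseteq\cL(A)$ is false in general. If $\dim A=\infty$ or $\rad B\neq 0$, every element of $\cL(A)$ lying in $A\omega(A,.)$ is traceless, while $\trace\bigl(\omega(x,.)z\bigr)=\omega(x,z)$ need not vanish. Your own remark that everything produced is a bracket, hence traceless, already contradicts it.

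The next step imitates Theorem~\ref{th:M(A)}(d), but that proof uses a composition. In $\cL(A)$ you may only take brackets, and you would be bracketing with $\omega(x,.)y$, which is not in $\cL(A)$. Finally, your last paragraph only describes what would have to be shown; it does not produce the traceless rank-one elements or the differences $af-bg$.

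\textbf{The missing idea} is to fix the element $\pi=\frac12 q(1,.)1$, which has trace $1$. It then suffices to prove $af-f(a)\pi\in\cL(A)$ for all $a\in A$ and $f\in q(A,.)+\omega(A,.)$. Indeed, a traceless $\varphi=\sum a_if_i$ equals $\sum\bigl(a_if_i-f_i(a_i)\pi\bigr)$.

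\emph{The $\omega$-part.} The corrected bracket above, together with $\omega(1,.)=0$ and $\omega(x,.)1\in\cL(A)$, gives $\omega(x,.)z-\omega(x,z)\pi\in\cL(A)$ for all $x,z\in A$.

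\emph{The $q$-part.} Take $x,y\in V$ with $\omega(x,y)=1$. Since $\sigma_{x,y}(1)=0$, the formula gives $[\sigma_{x,y},\omega(y,.)1]=-q(y,.)1$. Such $y$ span $V$, so $q(V,.)1\subseteq\cL(A)$. Since $\sigma_{1,x}\in\cL(A)$, this yields $q(1,.)x\in\cL(A)$. Finally, for $x,y\in V$,
\[
[\sigma_{1,x},q(y,.)1]=2\bigl(q(y,.)x-q(y,x)\pi\bigr),
\]
which completes the list.
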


\begin{proof}
If $B$ is symmetric, so that $\omega=0$, then $A$ is commutative and Equations \eqref{eq:LxRx}
 and \eqref{eq:sxy}, together with the fact that $\sigma_{A,A}$ is closed under
commutators, prove assertion (a).

\smallskip

Assume now that $B$ is not symmetric, so that $\omega\neq 0$. Equations \eqref{eq:LxRx},
\eqref{eq:Lx-Rx}, and \eqref{eq:sxy}, show that 
$\cL(A)=\FF\id +\alg_{\text{Lie}}\langle \sigma_{x,y}, \omega(x,.)1\mid x,y\in A\rangle$, 
where $\alg_{\text{Lie}}$
denotes the Lie subalgebra generated by its arguments. Note that the trace of $\sigma_{x,y}$ and of
$\omega(x,.)1$ is always $0$, so we get
\begin{equation}\label{eq:LABtr}
\begin{split}
\cL(A)&\subseteq \FF\id +\{\varphi\in A\bigl(B(A,.)+B(.,A)\bigr)\mid \trace(\varphi)=0\}\\
 &=\FF \id + \{\varphi\in A\bigl(q(A,.)+\omega(A,.)\bigr)\mid \trace(\varphi)=0\}.
\end{split}
\end{equation}

Let $\pi$ be the orthogonal projection onto $\FF 1$ relative to $q$, that is, $\pi=\frac{1}{2}q(1,.)1$, and
note that its trace is $1$.

For $x,y,z\in A$ compute
\begin{equation}\label{eq:sxywz1}
[\sigma_{x,y},\omega(z,.)1]
=\omega(z,.)\sigma_{x,y}(1)-\omega(z,y)q(x,.)1+\omega(z,x)q(y,.)1\in\cL(A).
\end{equation}
In case $x=1$ and $y,z\in V$, this gives
\[
2\omega(z,.)y-\omega(z,y)q(1,.)1=2\bigl(\omega(z,.)y-\omega(z,y)\pi\bigr)\in\cL(A),
\]
and as $\omega(1,.)=0$ and $\omega(y,.)1\in\cL(A)$ for any $y\in A$, 
the above equation is valid for any $y,z\in A$. In other words, we get
\begin{equation}\label{eq:wxypi}
\omega(x,.)y-\omega(x,y)\pi=\omega(x,.)y-\trace\bigl(\omega(x,.)y\bigr)\pi\in\cL(A).
\end{equation}
Now \eqref{eq:sxywz1}, with $x,y,z\in V$ such that $\omega(x,y)=1$ and $y=z$, gives $q(y,.)1\in\cL(A)$.
But $V$ is spanned by the elements $y$ with $\omega(A,y)\neq 0$, so we get 
\begin{equation}\label{eq:qV1}
q(V,.)1\in\cL(A).
\end{equation}
Since we  know that $\sigma_{1,x}=q(1,.)x-q(x,.)1$ belongs to $\cL(A)$, we also get 
\begin{equation}\label{eq:Vq1}
Vq(1,.)\in\cL(A).
\end{equation}
Besides, for any $x,y\in V$, we have
\[
[\sigma_{1,x},q(y,.)1]=q(y,.)\sigma_{1,x}(1)-q(y,x)q(1,.)1
 =2q(y,.)x-2q(y,x)\pi\in\cL(A),
\]
which, together with \eqref{eq:qV1} and \eqref{eq:Vq1} gives, for any $x,y\in A$,
\begin{equation}\label{eq:qxypi}
q(x,.)y-q(x,y)\pi=q(x,.)y-\trace\bigl(q(x,.)y\bigr)\pi\in\cL(A).
\end{equation}
Finally, if $\varphi\in A\bigl(B(A,.)+B(.,A)\bigr)=A\bigl(q(A,.)+\omega(A,.)\bigr)$, there are $m,n\in\ZZ_{\geq 0}$ and 
elements $x_i,y_i$, $i=1,\ldots,m$, and $x_j',y_j'$, $j=1,\ldots,n$, such that
\[
\varphi=\sum_{i=1}^m q(x_i,.)y_i + \sum_{j=1}^n \omega(x_j',.)y_j'.
\]
If, moreover, the trace of $\varphi$ is trivial, then 
\[
0=\trace(\varphi)=\sum_{i=1}^m q(x_i,y_i) + \sum_{j=1}^n\omega(x_j',y_j'),
\] 
so we get
\[
\varphi=\varphi-\trace(\varphi)\pi= 
\sum_{i=1}^m \Bigl(q(x_i,.)y_i-q(x_i,y_i)\pi\Bigr) 
  + \sum_{j=1}^n \Bigl(\omega(x_j',.)y_j'-\omega(x_j',y_j')\pi\Bigr),
\]
and hence, because of \eqref{eq:wxypi} and \eqref{eq:qxypi}, we conclude that $\varphi$ belongs
to $\cL(A)$.
\end{proof}


\bigskip

\section{Vidinli algebras over fields of characteristic  two}\label{se:char2}

Assume throughout this section that the characteristic of our ground field $\FF$ is $2$. It turns out
that the class of Vidinli algebras is very large in this situation, as it becomes clear already from
Example \ref{ex:char2}, and can be described in terms of a bilinear form and an anticommutative
multiplication.

To begin with, let us prove that, if the dimension is at least $3$, the unity $1$ lies in the radical of
the polar form of the norm. 

\begin{proposition}\label{pr:qA10}
Let $A$ be a Vidinli algebra over a field $\FF$ of characteristic $2$ with norm $q$. Then either 
$\dim_\FF A\leq 2$ or $q(A,1)=0$.
\end{proposition}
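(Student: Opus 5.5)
The plan is to argue by contraposition. Assume $q(A,1)\neq 0$ and show that $\dim_\FF A\leq 2$. The only tool needed is the linearization of \eqref{eq:conic}, combined with the Vidinli condition $[A,A]\subseteq\FF 1$. The point is that in characteristic $2$ the symmetrized product $xy+yx$ coincides with the bracket $[x,y]$, so it always lies in $\FF 1$.

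First I will record that the linear form $t(x)\bydef q(x,1)$ vanishes at the unity. Indeed $t(1)=q(1,1)=2q(1)=2=0$ in characteristic $2$. Hence $\ker t$ is a subspace containing $1$. Under our assumption $t\neq 0$, so $\ker t$ is a hyperplane of $A$.

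Next I will linearize \eqref{eq:conic}, replacing $x$ by $x+y$, to get
\[
xy+yx-q(x,1)y-q(y,1)x+q(x,y)1=0\quad\forall x,y\in A .
\]
In characteristic $2$ we have $xy+yx=xy-yx=[x,y]\in\FF 1$. Therefore
\[
t(x)y+t(y)x\in\FF 1\quad\text{for all }x,y\in A.
\]
Now fix $x$ with $t(x)=1$ and take any $y\in\ker t$. The relation gives $y\in\FF 1$, so $\ker t\subseteq\FF 1$. Since $\ker t$ is a hyperplane, $\dim A=\dim\ker t+1\leq 2$, which proves the proposition.

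I do not expect a genuine obstacle here. The only delicate point is to remember that $q(1,1)=0$ in characteristic $2$, so $1$ itself lies in $\ker t$. This is exactly what caps the dimension at $2$ rather than $1$. It is also consistent with $\FF\times\FF$, where $t$ does not vanish.
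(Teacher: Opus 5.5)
Your proof is correct and rests on the same key identity as the paper's, namely $q(x,1)y+q(y,1)x\in\FF 1$, obtained from the linearized conic equation together with $xy+yx=[x,y]\in\FF 1$ in characteristic $2$. The paper argues directly rather than by contraposition: if $1,x,y$ are linearly independent, the coefficient $q(x,1)$ must vanish, and such $x$ span $A$; your passage through the hyperplane $\ker t$ is the same argument rearranged.
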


\begin{proof}
If the dimension is at least $3$, take elements $x,y$ so that $1$, $x$, and $y$ are linearly independent.
Equations \eqref{eq:conic} and \eqref{eq:omega} give:
\[
xy+yx=q(x,1)y+q(y,1)x-q(x,y)1,\qquad [x,y]=xy+yx\in\FF1.
\]
As $1$, $x$ and $y$ are linearly independent we get $q(x,1)=0$. But the elements $x$ that are
linearly independent to $1$ span the whole algebra $A$, so we conclude that $q(A,1)=0$.
\end{proof}

\begin{remark}\label{re:qA10}
Any conic algebra $A$ with norm $q$ over a field $\FF$ of characteristic $2$ such that $q(A,1)=0$ 
is a Vidinli algebra, because \eqref{eq:conic} gives $x^2=q(x)1$ for any $x\in A$,
and hence, by linearization, we get $[x,y]=xy+yx=q(x,y)1\in\FF 1$.

Therefore, apart from the Vidinli algebras of dimension $1$ or $2$, the remaining Vidinli algebras
are just the conic algebras satisfying that the unity lies in the radical of the polar form of the norm.
\end{remark}

The only Vidinli algebra of dimension $1$ is, up to isomorphism, the ground field. Moreover, any
Vidinli algebra of dimension $2$ is necessarily commutative, so the condition $[A,A]\subseteq \FF 1$
is superfluous, and these algebras are just the two-dimensional conic algebras. Let $A$ be such an
algebra and pick an element $x\in A\setminus \FF 1$. We get the following possibilities:
\begin{itemize}
\item 
If $q(1,x)=0$, then $x^2=q(x)1$ and either
\begin{itemize}
\item[$\bullet$] $q(x)\in \FF^2$, so the element $\varepsilon =x-\sqrt{q(x)}1$ satisfies $\varepsilon^2=0$, and
$A=\FF 1\oplus \FF\varepsilon$ is just the algebra of dual numbers; or
\item[$\bullet$] $q(x)\not\in\FF^2$, and then $A$ is a purely inseparable field extension of $\FF$ of degree $2$.
\end{itemize}

\item If $q(1,x)\neq 0$, then the polynomial $X^2-q(x,1)X+q(x)1$ is separable so either
\begin{itemize}
\item[$\bullet$] $A$ is isomorphic to $\FF\times\FF$ if this polynomial has roots in $\FF$; or
\item[$\bullet$] $A$ is a separable field extension of $\FF$ of degree $2$, otherwise.
\end{itemize}
\end{itemize}

\smallskip

Hence, in what follows, assume that $A$ is a Vidinli algebra of dimension at least $3$. 
Proposition \ref{pr:qA10} gives $q(A,1)=0$, and hence \eqref{eq:conic} gives  $x^2=q(x)1$ for any 
$x\in A$. 

Let $V$ be a subspace of $A$ complementary to $\FF 1$. Note that there is no canonical way of 
choosing this subspace. For any $u,v\in V$, the product $uv$ belongs to $A=\FF 1\oplus V$, 
so there is a bilinear form
$\varphi\colon V\times V\rightarrow \FF$ and a bilinear map (multiplication) 
$*\colon V\times V\rightarrow V$ such that
\begin{equation}\label{eq:uvphi*}
uv=\varphi(u,v)1+u*v.
\end{equation}
As $u^2=q(u)1$, we obtain $q(u)=\varphi(u,u)$ and $u*u=0$ for any $u\in V$. It follows that,
for any  $\alpha\in \FF$ and $u\in V$, $q(\alpha 1+u)=\alpha^2+q(u)=\alpha^2+\varphi(u,u)$.

\medskip

Let us introduce some notation. Let $(V,*)$ be an anticommutative algebra over $\FF$ endowed with
a bilinear form $\varphi\colon V\times V\rightarrow \FF$, then $\cA(V,*,\varphi)$ will denote the
unital algebra defined on the direct sum $\FF 1\oplus V$, with multiplication given by
\begin{equation}\label{eq:AV*phi}
(\alpha 1+u)(\beta 1+v)=\bigl(\alpha \beta +\varphi(u,v)\bigr)1+\bigl(\alpha v+\beta u+u*v\bigr),
\end{equation}
for $\alpha,\beta\in\FF$ and $u,v\in V$.

\begin{remark}\label{re:conicAV*phi}
In a sense, the algebras $\cA(V,*,\varphi)$ are the natural analogues in characteristic $2$ of the
conic algebras in characteristic not $2$, with Equation \eqref{eq:AV*phi} being completely
analogous to \cite[Eq.~(2)]{QuadraticAlternative}. However, the decomposition $A=\FF 1\oplus V$ is not intrinsic in characteristic $2$.
\end{remark}

Note that $\cA(V,*,0)$ is just the unitization of $(V,*)$ (see Example \ref{ex:char2}). Also, the
algebra $\cA(V,*,\varphi)$ is commutative if and only if $\varphi$ is symmetric. (Recall that our
ground field has characteristic $2$ here, so any anticommutative product is commutative.)

We need some extra bit of notation. Given an anticommutative algebra $(V,*)$ endowed with a bilinear
form as above, and given any linear form $f\in V^*$, denote by $*^f$ the new anticommutative 
multiplication on $V$ defined by
\[
u*^f v=u*v+f(u)v+f(v)u,
\]
and the new bilinear form $\varphi^f$ given by
\[
\varphi^f(u,v)=\varphi(u,v)+f(u)f(v)+f(u*v),
\]
for any $u,v\in V$.

\medskip

Our last result describes the Vidinli algebras of dimension at least $3$ over fields of characteristic $2$.

\begin{theorem}\label{th:AV*phi}
Let $A$ be a Vidinli algebra over a field $\FF$ of characteristic $2$ of dimension at least $3$. Then 
$A$ is isomorphic to the algebra $\cA(V,*,\varphi)$ for some anticommutative algebra $(V,*)$
endowed with a bilinear form $\varphi$.

Conversely, given any anticommutative algebra $(V,*)$ endowed with a bilinear form $\varphi$,
the algebra $\cA(V,*,\varphi)$ is a Vidinli algebra, with norm given by 
$q(\alpha 1+u)=\alpha^2+\varphi(u,u)$, for any $\alpha\in \FF$ and $u\in V$.

Moreover, given two anticommutative algebras $(V,*)$ and $(W,\diamond)$, endowed with
respective bilinear forms $\varphi$ and $\psi$, the Vidinli algebras $\cA(V,*,\varphi)$ and
$\cA(W,\diamond,\psi)$ are isomorphic if and only if there is a linear form $f\in V^*$ such
that the triples $(V,*^f,\varphi^f)$ and $(W,\diamond,\psi)$ are isomorphic, that is,
there is a linear isomorphism $\Phi\colon V\rightarrow W$ such that
\[
\Phi(u*^f v)=\Phi(u)\diamond \Phi(v),\qquad \varphi^f(u,v)=\psi\bigl(\Phi(u),\Phi(v)\bigr),
\]
for any $u,v\in V$.
\end{theorem}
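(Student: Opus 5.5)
The first assertion is essentially the computation preceding the statement. By Proposition \ref{pr:qA10} we have $q(A,1)=0$, so $x^2=q(x)1$ for all $x$. Choose any complement $V$ of $\FF 1$ and define $\varphi$ and $*$ through \eqref{eq:uvphi*}. Then $u*u=0$, so $*$ is anticommutative. Since $1$ is the unity, bilinearity gives exactly \eqref{eq:AV*phi}, and hence $A=\cA(V,*,\varphi)$.

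For the converse, I will verify \eqref{eq:conic} directly in $\cA(V,*,\varphi)$ with $q(\alpha1+u)=\alpha^2+\varphi(u,u)$. First, $q(1)=1$. Next, $(\alpha1+u)^2=(\alpha^2+\varphi(u,u))1+2\alpha u+u*u=q(\alpha1+u)1$, using characteristic $2$ and $u*u=0$. The polar form is
\[
q(\alpha1+u,\beta1+v)=2\alpha\beta+\varphi(u,v)+\varphi(v,u)=\varphi(u,v)+\varphi(v,u),
\]
so $q(x,1)=0$ and \eqref{eq:conic} reads $x^2+q(x)1=0$, which holds. Remark \ref{re:qA10} then shows the algebra is Vidinli.

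For the isomorphism criterion, the key observation is that the only freedom lies in the choice of complement. Any other complement of $\FF1$ in $A=\cA(V,*,\varphi)$ has the form $V_f=\{u+f(u)1\mid u\in V\}$ for a unique $f\in V^*$, and $u\mapsto u+f(u)1$ is a linear isomorphism $V\to V_f$. I will compute, for $u,v\in V$,
\[
(u+f(u)1)(v+f(v)1)=\bigl(\varphi(u,v)+f(u)f(v)\bigr)1+u*v+f(u)v+f(v)u.
\]
Rewriting $u*v+f(u)v+f(v)u=u*^fv$ as $(u*^fv+f(u*^fv)1)+f(u*^fv)1$ expresses this product in the decomposition $\FF1\oplus V_f$. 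The scalar part becomes $\varphi(u,v)+f(u)f(v)+f(u*^fv)$. Since $f(f(u)v+f(v)u)=2f(u)f(v)=0$, we have $f(u*^fv)=f(u*v)$, so this equals $\varphi^f(u,v)$. Hence, with respect to $V_f$, the algebra $A$ is $\cA(V,*^f,\varphi^f)$, and the identity map gives $\cA(V,*,\varphi)\cong\cA(V,*^f,\varphi^f)$. Consequently, if $\Phi$ is an isomorphism of triples $(V,*^f,\varphi^f)\cong(W,\diamond,\psi)$, then $\id_{\FF1}\oplus\Phi$ is an isomorphism $\cA(V,*^f,\varphi^f)\to\cA(W,\diamond,\psi)$ by \eqref{eq:AV*phi}, and composing gives the ``if'' direction.

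For ``only if'', let $\theta\colon\cA(V,*,\varphi)\to\cA(W,\diamond,\psi)$ be an isomorphism. It fixes $1$, so $\theta^{-1}(W)$ is a complement of $\FF1$ and equals $V_f$ for some $f\in V^*$. On $V_f$, the isomorphism $\theta$ carries the decomposition $\FF1\oplus V_f$ onto $\FF1\oplus W$. It therefore intertwines the induced data, namely $(V,*^f,\varphi^f)$ transported to $V_f$ and $(W,\diamond,\psi)$. Setting $\Phi(u)=\theta(u+f(u)1)$ gives the required isomorphism of triples, because the $\FF1$- and $W$-components of products are preserved.

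The main obstacle is the computation identifying the structure induced on $V_f$ with $(*^f,\varphi^f)$, in particular the cancellation $f(u*^fv)=f(u*v)$. Once that step is done, the rest is bookkeeping.
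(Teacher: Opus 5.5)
Your proof is correct and is essentially the paper's argument. The paper writes the isomorphism as $\Upsilon(u)=f(u)1+\Phi(u)$ and compares $\Upsilon(uv)$ with $\Upsilon(u)\Upsilon(v)$; your $f$ (defined by $\theta^{-1}(W)=V_f$) and your $\Phi(u)=\theta(u+f(u)1)$ are exactly these, and your computation is the same comparison, repackaged as the change-of-complement isomorphism $\cA(V,*^f,\varphi^f)\to\cA(V,*,\varphi)$, $\alpha 1+u\mapsto(\alpha+f(u))1+u$. That map is what your phrase ``the identity map'' should be read as, and this packaging has the small advantage of showing that $*^f$ and $\varphi^f$ just record the non-canonical choice of complement to $\FF 1$.
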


\begin{proof}
The arguments above show that any Vidinli algebra is isomorphic to the algebra $\cA(V,*,\varphi)$, where
$V$ is any subspace complementing $\FF 1$, and $*$ and $\varphi$ are determined by
the projections on $V$ and on $\FF 1$ of the multiplication of elements in $V$ as in \eqref{eq:uvphi*}.

Conversely, for any anticommutative algebra $(V,*)$ endowed with a bilinear form $\varphi$,
any element $x=\alpha 1+u\in A=\cA(V,*,\varphi)$ satisfies $x^2-q(x,1)x+q(x)1=0$, where
$q(x)$ is the quadratic form given by $q(x)=\alpha^2+\varphi(u,u)$, 
because we have $q(x,1)=\alpha q(1,1)=2\alpha=0$ and $x^2=(\alpha^2 +\varphi(u,u))1=q(x)1$.
Hence $\cA(V,*,\varphi)$ is conic, relative to the quadratic form $q$, and as $q(1,A)=0$, 
it is a Vidinli algebra
by Remark \ref{re:qA10}.

Finally, if $\Upsilon\colon\cA(V,*,\varphi)\rightarrow \cA(W,\diamond,\psi)$ is an isomorphism of algebras,
$\Upsilon$ takes the unity $1$ to $1$, and there exists a linear form $f\in V^*$ and a linear
isomorphism $\Phi\colon V\rightarrow W$ such that
$\Upsilon(u)=f(u)1+\Phi(u)$ for any $u\in V$. For any $u,v\in V$, straightforward computations give:
\[
\Upsilon(uv)=\Upsilon\bigl(\varphi(u,v)1+u*v\bigr)
   =\bigl(\varphi(u,v)+f(u*v)\bigr)1+\Phi(u*v),
\]
and
\begin{multline*}
\Upsilon(u)\Upsilon(v)=\bigl(f(u)1+\Phi(u)\bigr)\bigl(f(v)1+\Phi(v)\bigr)\\
 =\bigl(f(u)f(v)+\psi(\Phi(u),\Phi(v))\bigr)1 + \bigl(f(u)\Phi(v)+f(v)\Phi(u)+\Phi(u)\diamond\Phi(v)\bigr),
\end{multline*}
thus proving the equalities
\[
\varphi(u,v)+f(u*v)=f(u)f(v)+\psi\bigl(\Phi(u),\Phi(v)\bigr),
\]
which is equivalent to $\varphi^f(u,v)=\psi\bigl(\Phi(u),\Phi(v)\bigr)$, and
\[
\Phi(u*v)=f(u)\Phi(v)+f(v)\Phi(u)+\Phi(u)\diamond\Phi(v),
\]
which is equivalent to $\Phi(u*^f v)=\Phi(u)\diamond\Phi(v)$. Therefore, $\Phi$ is an isomorphism
from $(V,*^f,\varphi^f)$ to $(W,\diamond,\psi)$. 

And the arguments can be reversed to show that if $\Phi$ is such an isomorphism, the the linear
map $\Upsilon$ given by $\Upsilon(1)=1$ and $\Upsilon(u)=f(u)1+\Phi(u)$, for $u\in V$, is
an isomorphism of algebras $\cA(V,*,\varphi)\rightarrow \cA(W,\diamond,\psi)$.
\end{proof}

\smallskip

Our last result shows a curious behavior.

\begin{proposition}\label{pr:nucleus2}
Let $A$ be a Vidinli algebra over a field $\FF$ of characteristic $2$. Then the associative nucleus $N(A)$
coincides with the center $Z(A)$. Moreover, either the center $Z(A)$ equals
$\FF 1$, or the algebra $A$ is commutative.
\end{proposition}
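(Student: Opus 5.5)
First I dispose of small dimensions: if $\dim A\leq 2$, then $A$ is commutative and associative (it is generated by one element), so $N(A)=Z(A)=A$ and $A$ is commutative, and both assertions hold. So I assume $\dim A\geq 3$. By Theorem \ref{th:AV*phi} I write $A=\cA(V,*,\varphi)$ with a fixed complement $V$ of $\FF 1$. By Remark \ref{re:qA10}, $[x,y]=q(x,y)1$, so the commutative center $K(A)$ is the radical of the polar form $q(.,.)$ of the norm. To show $N(A)=Z(A)$ it is enough to prove $N(A)\subseteq K(A)$. Since $1\in N(A)$, it suffices to treat elements $u\in V\cap N(A)$, after subtracting the scalar part.

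The key computation mirrors the one in characteristic not $2$, namely the associator $(u,v,u)$. Take $u\in V$ with $(u,v,w)=0$ for all $v,w$. I use $u^2=q(u)1$ and $xy+yx=q(x,y)1$. Then
\[
(uv)u=u(vu)+(u,v,u)=u\bigl(uv+q(u,v)1\bigr)=q(u)v+q(u,v)u .
\]
Also $(u,v,u)=(uv)u-u(vu)$, and here I use the middle nucleus condition $(v,u,u)=0$, i.e.\ $(vu)u=v(uu)=q(u)v$. Hence
\[
(uv)u=\bigl(vu+q(u,v)1\bigr)u=q(u)v+q(u,v)u,
\]
while $u(vu)=u\bigl(uv+q(u,v)1\bigr)=q(u)v+q(u,v)u$ by the left nucleus condition. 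So flexibility alone gives nothing; I need a better associator. I try $(u,v,w)$ with three elements. From $(u,v,w)=0=(v,u,w)$ and summing,
\[
\bigl((uv+vu)w\bigr)=u(vw)+v(uw),
\]
so $q(u,v)w=u(vw)+v(uw)$. Similarly, $(v,w,u)=0=(v,u,w)$ gives $v(wu+uw)=(vw)u+(vu)w$, hence $q(u,w)v=(vw)u+(vu)w$. Now I replace $u(vw)$ by $(vw)u+q(u,vw)1$ and $v(uw)$ by $(uw)v+q(v,uw)1$. Comparing with the permuted identities, and using the nucleus conditions to move $u$ freely, I aim to derive $q(u,v)w\in\FF 1+\FF v$ for all $v,w$. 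Choosing $w\notin \FF 1+\FF v$, which is possible since $\dim A\geq 3$, this forces $q(u,v)=0$. Thus $u\in K(A)$, and $N(A)=Z(A)$. This bookkeeping is the step I expect to be the main obstacle: finding the right combination of associators. If it resists, I would fall back on expanding in coordinates via \eqref{eq:AV*phi} with $u*v$ and $\varphi$, where the nucleus conditions become linear identities in $*$ and $\varphi$.

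For the dichotomy, suppose $A$ is not commutative, so $q(.,.)\neq 0$, and let $z=u$ (mod $\FF 1$) lie in $Z(A)\cap V$. Then $u$ lies in the radical of $q(.,.)$, and $(u,v,w)=0$ gives $(uv)w=u(vw)$. Pick $v,w$ with $q(v,w)=1$. Then
\[
u\bigl(vw+wv\bigr)=u=(uv)w+(uw)v ,
\]
and since $u$ is central, $(uw)v=(wu)v=w(uv)=(uv)w+q(uv,w)1$. Hence $u=q(uv,w)1\in\FF 1$, so $u\in V\cap\FF 1=0$. Therefore $Z(A)=\FF 1$. I should double-check that $(wu)v=w(uv)$ follows from $u\in N(A)$, which is the middle nucleus condition.
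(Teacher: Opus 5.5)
\textbf{Verdict: genuine gap.} Your reduction to $\dim A\geq 3$ and to $u\in V\cap N(A)$ is fine, and so is your proof of the dichotomy. However, the first assertion, $N(A)\subseteq K(A)$, is never proved. After the $(u,v,u)$ computation you only state an aim, namely $q(u,v)w\in\FF1+\FF v$, and a fallback to coordinates, and you carry out neither. That is half of the proposition.

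Your reason for abandoning $(u,v,u)$ is also a misdiagnosis. You evaluated both $(uv)u$ and $u(vu)$ by applying the nucleus conditions, so they were bound to agree. The information comes from evaluating the same product in two ways. From $(v,u,u)=0$ you correctly obtained $(uv)u=q(u)v+q(u,v)u$. But $(uv)u=u(uv)+q(uv,u)1=q(u)v+q(uv,u)1$ by $(u,u,v)=0$. Hence $q(u,v)u=q(uv,u)1$, and for $0\neq u\in V$ this forces $q(u,v)=0$ for all $v$, that is, $u\in K(A)$. This is exactly what the paper extracts by computing $(u,v,u)$ directly in $\cA(V,*,\varphi)$. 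There the $V$-component is $\bigl(\varphi(u,v)-\varphi(v,u)\bigr)u$, because $(u*v)*u=u*(v*u)$. Your three-element plan would also have worked: substituting $(vw)u=q(u,w)v+v(uw)$ into your identity $q(u,v)w=u(vw)+v(uw)$ gives $q(u,v)w=q(u,w)v+q(u,vw)1$.

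Your second paragraph is correct and takes a different, shorter route than the paper. The paper first shows, through a chain of identities for $\varphi$, that $uA\subseteq K(A)$, and then uses $u[x,y]=[ux,y]=0$. You only need $q(v,w)u=u[v,w]=[uv,w]\in\FF1$, which holds automatically because $[A,A]\subseteq\FF1$, so you avoid the $\varphi$ bookkeeping entirely. If you run the same computation without assuming $uw=wu$, you get $q(v,w)u+q(u,w)v=q(uv,w)1$ for every $u\in N(A)$. Taking $v\notin\FF1+\FF u$, such $v$ span $A$ when $\dim A\geq 3$, so $N(A)\neq\FF1$ forces $q(.,.)=0$. That single identity proves both assertions at once and would have closed your gap as well.
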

\begin{proof}
If $\dim A\leq 2$, then $A$ is commutative and associative, so that $Z(A)=A=N(A)$. 

Assume, in what follows, that the dimension of $A$ is at least $3$, so that $A$ is, up to isomorphism,
an algebra $\cA(V,*,\varphi)$. For any element $u\in N(A)\cap V$ and any $v\in V$, we have
\begin{multline*}
0=(u,v,u)=(uv)u-u(vu)\\
=\bigl(\varphi(u*v,u)1-\varphi(u,v*u)\bigr)1+\bigl((u*v)*u-u*(v*u)+\varphi(u,v)u
-\varphi(v,u)u\bigr),
\end{multline*}
and, since $(u*v)*u=u*(v*u)$ by anticommutativity, we conclude that $\varphi(u,v)=\varphi(v,u)$, and
hence $u$ lies in the commutative center. It follows that the associative nucleus $N(A)$ is contained
in the commutative center $K(A)$, and hence the center equals the associative nucleus.

Now, for $u,v,w\in V$, their associator is
\[
(u,v,w)=\bigl(\varphi(u*v,w)-\varphi(u,v*w)\bigr)1
   +\bigl((u*v)*w-u*(v*w)+\varphi(u,v)w-\varphi(v,w)u\bigr),
\]
and hence, if $u$ lies in $N(A)$, we get $\varphi(u*v,w)=\varphi(u,v*w)$ for any $v,w\in V$. In the same
vein we check $\varphi(w,v*u)=\varphi(w*v,u)$. Using that the characteristic is $2$, that $*$ is
anticommutative (and hence commutative!), and that $u$ is in the center, this gives,
\[
\varphi(u*v,w)=\varphi(u,v*w)=\varphi(v*w,u)=\varphi(w*v,u)=\varphi(w,v*u)=\varphi(w,u*v).
\]
We conclude that $u*v$ lies in the commutative center, and so does $uv$. In other words, we get
$uA\subseteq K(A)$. But then, for any $x,y\in A$, using that $u$ is in the center, we have
$u[x,y]=[ux,y]=0$, so that $u[A,A]=0$. Now, $[A,A]$ is contained in $\FF 1$, so we conclude
that either $u=0$ or $[A,A]=0$. Hence either $N(A)\cap V=0$ or $A$ is commutative, as required.
\end{proof}

Following Remark \ref{re:conicAV*phi}, the  algebras of the form $\cA(V,*,\varphi)$ that are commutative,
that is, with $\varphi$ symmetric, are the analogues in characteristic $2$ of the conic algebras in
characteristic not $2$ such that the linear map $x\mapsto q(1,x)1-x$ is an involution. (See, e.g., 
\cite[Proposition 2.1(i)]{QuadraticAlternative}.)


\end{document}